\documentclass[11pt,a4paper,english]{article}
\usepackage[english]{babel}

\usepackage[DIV=12]{typearea} 

\usepackage{blindtext}

\usepackage[dvipsnames]{xcolor}

\usepackage{amsthm, amsmath, amsfonts, amssymb, amscd, mathrsfs, comment}
\usepackage{ragged2e}
\usepackage{tikz-cd} 
\usepackage[symbol]{footmisc}

\newcommand{\gen}[1]{\langle #1\rangle}

\newcommand\Dir{\mathop{\rm Dr}}
\newcommand\ac{\operatorname{AC}}

\newcommand\DCC{\operatorname{DCC}}
\newcommand\DCCinf{\DCC{-\infty}}
\newcommand\RCC{\operatorname{RCC}}

\usepackage{tikz}
\usepackage{hyperref}

\newtheorem{theorem}{Theorem}
\newtheorem{theo}[theorem]{Theorem}
\newtheorem{lem}[theorem]{Lemma}

\newtheorem{cor}[theorem]{Corollary}
\newtheorem{prop}[theorem]{Proposition}

\newtheorem{rem}[theorem]{Remark}

\numberwithin{theorem}{section}
\date{}
\title{An antichain condition for infinite groups
\footnote{\,The authors are members of the non-profit association ‘‘AGTA --- Advances in Group Theory and Applications’’ (www.advgrouptheory.com), and are supported by GNSAGA (INdAM). Funded by the European Union - Next Generation EU, Missione 4 Componente 1 CUP B53D23009410006, PRIN 2022- 2022PSTWLB - Group Theory and Applications. The first author is supported by the FRA project FORMALG of the University of Naples Federico II (CUP E65F22000060001).
}
}
\author{Mattia Brescia, Bernardo Di Siena, Alessio Russo}

\begin{document}

\maketitle

\begin{abstract}
Let $\chi$ be a subgroup-theoretical property. We introduce an \emph{antichain condition} $\ac_\chi$ which forbids the existence of infinite antichains of mutually permutable non-$\chi$ subgroups whose infinite joins remain non-$\chi$. This is a ``width'' analogue of the real chain condition on non-$\chi$ subgroups, and it extends the usual hierarchy of weak chain conditions (double chain condition, deviation, and $\RCC$).

Our main results show that, within the universe of generalized radical groups, the antichain condition is as rigid as the corresponding chain conditions. For the properties $\chi$ of normality, almost normality, near normality, permutability, modularity, and pronormality, we prove that a generalized radical group satisfies $\ac_\chi$ if and only if it satisfies $\RCC$ on non-$\chi$ subgroups; equivalently, it satisfies any of the standard weak chain conditions on non-$\chi$ subgroups. In particular, we obtain minimax-type dichotomies: either the group is minimax, or \emph{every} subgroup satisfies $\chi$. This yields characterizations in terms of Dedekind groups, quasi-Hamiltonian groups, groups with modular subgroup lattice, and $\overline{T}$-groups. In the pronormal case, one has to deal with locally finite simple groups and a use of the Classification of Finite Simple Groups seems unavoidable.
\end{abstract}

\medskip

\noindent{\it Mathematics Subject Classification \textnormal(2020\textnormal)}: Primary 20E15, 20F22; Secondary 20F19, 20F24.
\medskip

\noindent{\it Keywords}: antichain condition; subgroup lattice; chain conditions; FC-groups; locally finite simple groups.

\section{Introduction}

Finiteness conditions on lattices of subgroups have long been a driving force in infinite group theory. Beyond the classical maximal and minimal conditions, several \emph{weak} chain conditions have proved particularly effective in forcing strong structural constraints on soluble and generalized soluble groups. Among the most studied are the \emph{double chain condition}, the \emph{weak double chain condition} $\DCCinf$, \emph{deviation}, and the \emph{real chain condition} $\RCC$.

Recall that a poset $\Lambda$ satisfies (weak) double chain condition if it contains no subposet with the order type of $\mathbb{Z}$ (where each subgroup has infinite index in its successor). It has \emph{deviation} if it contains no subposet with the order type of $\mathbb{Q}$ (see, for instance, \cite[6.1.13]{McCRob} and \cite{dGKR}), and it satisfies $\RCC$ if it contains no subposet with the order type of $\mathbb{R}$. These conditions can be imposed not only on the full subgroup lattice, but also on the family of subgroups failing a given property $\chi$ (or, dually, on the family of $\chi$-subgroups). This point of view goes under the broad tag of works on groups with ``few non-$\chi$ subgroups''.

A systematic investigation of weak chain conditions on restricted systems of subgroups has been recently carried out in a series of papers. Beyond the double chain condition on subnormal subgroups
and on non-subnormal subgroups,
several natural classes of $\chi$-subgroups were treated, including abelian and non-abelian subgroups \cite{BR1,BR2}, and non-pronormal subgroups \cite{BdG1}.
 These results highlight a recurring phenomenon in generalized radical groups: weak chain conditions typically collapse to minimax-type alternatives, often forcing the group to belong to an extremal class where every subgroup satisfies the relevant property. Related ``extremal'' subgroup constraints involving pronormality (for instance, metahamiltonian and abelian-or-pronormal groups) are studied in \cite{MetaHam,AbOrPro}.

More recently, Dardano and De Mari introduced and studied the real chain condition on non-$\chi$ subgroups, showing that it provides a flexible framework that simultaneously generalizes $\DCCinf$ and deviation, yet remains strong enough to yield sharp structure theorems for generalized radical groups \cite{Dardo,Dardo2}. The present paper complements this ``depth'' perspective by introducing a ``width'' analogue based on antichains, generalizing and sharpening most of their theorems.

\medskip

\noindent\textbf{An antichain condition.}
Let $\chi$ be a subgroup property. We say that a group $G$ satisfies $\ac_\chi$ if it does not contain an infinite antichain $(H_\lambda)_{\lambda\in\Lambda}$ of mutually permutable subgroups such that
\begin{enumerate}
\item for every $\lambda\in\Lambda$, $H_\lambda \not\le \bigvee_{\gamma\in\Lambda\setminus\{\lambda\}} H_\gamma$;
\item for every infinite subfamily $\{H_\lambda\mid \lambda\in\Lambda_0\}$, the join $\bigvee_{\lambda\in\Lambda_0} H_\lambda$ is \emph{not} a $\chi$-subgroup of $G$.
\end{enumerate}
By construction, $\ac_\chi$ is weaker than $\RCC$ on non-$\chi$ subgroups; in fact, we prove in Lemma~\ref{rob} that $\RCC$ implies $\ac_\chi$.

In this paper, we want to give the broadest setup in which all the aforementioned chain conditions live, showing how lattice ``width'' is indeed a new, more general way to look at lattice ``depth'' in infinite groups.

\medskip

\noindent\textbf{Scope and main results.}
We work throughout in the class of \emph{generalized radical groups}, i.e.\ groups possessing an ascending normal series whose factors are locally finite or locally nilpotent. This class is large enough to cover most of the classical classes of groups satisfying finiteness conditions, yet rigid enough for the standard pathological classes of groups (e.g. Tarski monsters) to be avoided.

Our first set of applications concerns the properties of being \emph{almost normal} and \emph{nearly normal}. In Section~3 we prove that, for generalized radical groups, $\ac_{an}$ and $\ac_{nn}$ are equivalent to $\DCCinf$ on non-(almost normal) and non-(nearly normal) subgroups, respectively, and hence to $\RCC$ and deviation on the same systems; equivalently, every non-minimax subgroup must be almost normal (resp.\ nearly normal). This is proved in Theorem~\ref{thannn} and recovers and sharpens the minimax alternative established for the corresponding chain conditions, and it connects with the study of groups having few subnormal non-normal subgroups
.

We then specialize to \emph{normality}. As an application of the result just stated, in the same section we obtain that, for generalized radical groups, $\ac_n$ is equivalent to $\DCCinf$ on non-normal subgroups and to the other weak chain conditions on non-normal subgroups; moreover, it forces a strong dichotomy: the group is either minimax or Dedekind (Theorem~\ref{fullnormal}).

Section~4 is devoted to the properties of being \emph{permutable} (quasinormal) and \emph{modular}. Here we show that $\ac_{per}$ and $\ac_{mod}$ (see, respectively, Theorem \ref{radhami} and \ref{thnm}) again coincide with $\DCCinf$ on the corresponding families of non-$\chi$ subgroups, yielding dichotomies ``minimax versus extremal'' (quasi-Hamiltonian groups in the permutable case, and groups with modular subgroup lattice in the modular case).

Finally, Section~5 treats \emph{pronormal} subgroups. Besides the same minimax-versus-extremal pattern (see the main Theorem \ref{thpr}), new phenomena appear in the locally finite case: controlling pronormality interacts with the structure theory of locally finite simple groups, in the spirit of \cite{BRcentral,LFSPron} and of the recent work on the pronorm of a group \cite{Pronorm}. The structural result about locally (radical-by-finite) groups was previously claimed in 
\cite{Dardo2}. While the statement is valid, the proof given there contain gaps in the passage to the infinite locally finite case. Our Proposition~\ref{simplepro} fills this gap using detailed information on families of finite simple groups; accordingly, the Classification of Finite Simple Groups enters the argument.

\medskip

The paper is organized as follows. Section~2 establishes general tools for extracting ``good'' subgroups from infinite direct products under $\ac_\chi$, and records the basic implications holding among the different chain conditions. Sections~3--5 apply these tools to the subgroup properties listed above and provide the corresponding structure theorems.

Most of our notation is standard and can be found in \cite{Rob72}.

\section{Preliminaries}

We begin with the following remark about the hierarchy of chain condition classes. Its straightforward proof can be found in \cite{Dardo}.

\begin{rem}
    Let $\Lambda$ be a poset of subgroups of a group $G$. The following hold:
    \begin{itemize}
        \item[1)] If $\Lambda$ satisfies $\DCCinf$, then $\Lambda$ has deviation.
        \item[2)] If $\Lambda$ has deviation, then $\Lambda$ satisfies $\RCC$.
        \item[3)] If $\Lambda$ satisfies $\RCC$ and is complete, then $\Lambda$ has deviation.
    \end{itemize}
\end{rem}

The $\RCC$ is the weakest of these conditions. However, the following lemma shows that the antichain conditions are a further generalization of the real chain conditions.

\begin{lem}\label{rob}
    Let $\chi$ be a subgroup property. Then the class of groups satisfying the $\RCC$ on non-$\chi$ subgroups is a subclass of the class of groups satisfying $\ac_\chi$.
\end{lem}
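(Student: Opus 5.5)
We argue by contraposition: assuming $G$ fails $\ac_\chi$, we build a chain of non-$\chi$ subgroups of order type $\mathbb{R}$. Fix an infinite antichain $(H_\lambda)_{\lambda\in\Lambda}$ of mutually permutable subgroups satisfying conditions 1 and 2 of the definition. Passing to a subfamily, we may assume $\Lambda$ is countably infinite; both conditions survive this. Condition 1 survives because the join over a smaller index set is contained in the original one, and condition 2 only concerns infinite subfamilies, of which there are now fewer. So identify $\Lambda$ with $\mathbb{Q}$ via a bijection.

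For each real number $r$ set
\[
K_r=\bigvee_{q\in\mathbb{Q},\,q<r} H_q .
\]
Each index set $\{q<r\}$ is infinite, so by condition 2 every $K_r$ is a non-$\chi$ subgroup of $G$. If $r\le s$ then clearly $K_r\le K_s$.

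It remains to show the map $r\mapsto K_r$ is strictly increasing, which is the only point needing care.
\begin{itemize}
\item Let $r<s$ and choose a rational $q$ with $r\le q<s$.
\item Then $H_q\le K_s$.
\item On the other hand, $K_r$ is generated by subgroups $H_{q'}$ with $q'<r\le q$, so $q'\neq q$. Hence $K_r\le\bigvee_{\gamma\ne q}H_\gamma$.
\item By condition 1, $H_q\not\le\bigvee_{\gamma\ne q}H_\gamma$, so $H_q\not\le K_r$.
\end{itemize}
Thus $K_r<K_s$, and $\{K_r\}_{r\in\mathbb{R}}$ is a chain of non-$\chi$ subgroups of order type $\mathbb{R}$. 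This contradicts $\RCC$ on non-$\chi$ subgroups.

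The argument does not use the permutability hypothesis; it only makes the antichain condition stronger, so it is harmless here. I expect no real obstacle. The only delicate points are that condition 1 is preserved under passage to a countable subfamily, which holds since joins shrink, and that every $K_r$ comes from an infinite subfamily, so that condition 2 applies.
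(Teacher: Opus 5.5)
Your proof is correct and follows the paper's argument: pass to a countable subfamily indexed by $\mathbb{Q}$ and set $K_r=\langle H_q \mid q<r\rangle$ for $r\in\mathbb{R}$. You also spell out why $r\mapsto K_r$ is strictly increasing, by choosing a rational $q$ with $r\le q<s$ and invoking condition 1, a step the paper leaves implicit.
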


\begin{proof}
Let $(H_\lambda)_{\lambda\in\Lambda}$ be an infinite antichain of mutually permutable subgroups of $G$ such that:
    \begin{enumerate}
        \item For every $\lambda \in \Lambda$, $H_\lambda \not\le \bigvee_{\gamma\in\Lambda\setminus\{\lambda\}}H_\gamma$;
        \item The join of every infinite subfamily of $(H_\lambda)_{\lambda\in\Lambda}$ is not a $\chi$-subgroup of $G$.
    \end{enumerate}
Passing to a countably infinite subfamily, we may assume that $\Lambda$ is countable, and reindex the family as $(H_q)_{q\in\mathbb{Q}}$. For all $r \in \mathbb{R}$, define $K_r = \langle H_q \mid q < r, q \in \mathbb{Q} \rangle$. Due to the properties of $(H_q)_{q\in\mathbb{Q}}$, the family $(K_r)_{r\in\mathbb{R}}$ constitutes a real chain of non-$\chi$ subgroups of $G$.
\end{proof}

The following lemma allows to find ``good'' subgroups, under the assumption that $\chi$ satisfies some specific closure properties. We will say that $\chi$ is {\it normalizing join-closed} if, for any two $\chi$-subgroups $H$ and $K$ of $G$, if $H\leq N_G(K)$, then $HK$ is a $\chi$ subgroup. Finally, $\chi$ will be said to be {\it finite-intersection-closed} if, for any two $\chi$-subgroups $H$ and $K$ of $G$, then $H\cap K$ is a $\chi$ subgroup.

\begin{lem}\label{int1}
Let $\chi$ be a subgroup property. Let $G$ be a group satisfying $\ac_\chi$ and $L$ a subgroup of $G$. Suppose $G$ has a section $X/Y$ which is the direct product of an infinite collection $(X_\lambda /Y)_{\lambda \in \Lambda}$ of non-trivial subgroups such that $L \cap X \le Y$ and $LX_\lambda = X_\lambda L$ for every $\lambda\in\Lambda$. Then we have the following.
\begin{itemize}
    \item[(i)] If $\chi$ is normalizing join-closed, then $LX$ is a $\chi$-subgroup of $G$.
    \item[(ii)] There exists a normal subgroup $\widehat{X}$ of $X$ such that $L\widehat{X}$ is a $\chi$-subgroup of $G$. Moreover, if $\chi$ is normalizing join-closed, $\widehat{X}$ can be chosen to be a $\chi$-subgroup of $G$.
    \item[(iii)] If $\chi$ is finite-intersection-closed, then $LY$ is a $\chi$-subgroup of $G$.
\end{itemize}
\end{lem}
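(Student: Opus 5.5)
The idea is to build an antichain that $\ac_\chi$ forbids. Split the index set $\Lambda$ into infinitely many infinite pairwise disjoint subsets $\Lambda_1,\Lambda_2,\dots$ (more generally, use a partition into infinitely many infinite parts, or refine it as needed). For each part put $X_{(i)}=\langle X_\lambda\mid \lambda\in\Lambda_i\rangle$, so that $X_{(i)}/Y$ is the direct product of the $X_\lambda/Y$ with $\lambda\in\Lambda_i$. Then consider the subgroups $H_i=LX_{(i)}$. These are subgroups because $L$ permutes with each $X_\lambda$, hence with their join. They also permute mutually, since $H_iH_j=LX_{(i)}LX_{(j)}=LX_{(i)}X_{(j)}L=\dots$; here the $X_\lambda$ normalize one another modulo $Y$ and lie in $X$, and $X_{(i)}X_{(j)}$ is a subgroup containing $Y$. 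In fact $H_iH_j=LX_{(i)}X_{(j)}$ is symmetric in $i,j$.

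The key step is the antichain independence condition $H_i\not\le\bigvee_{j\ne i}H_j$. The join of the $H_j$ with $j\ne i$ is $LX_{(\ne i)}$, where $X_{(\ne i)}=\langle X_\lambda\mid\lambda\notin\Lambda_i\rangle$. If $X_{(i)}\le LX_{(\ne i)}$, then by Dedekind's modular law $X_{(i)}\le (L\cap X)X_{(\ne i)}\le YX_{(\ne i)}=X_{(\ne i)}$, which contradicts the directness of the product modulo $Y$. This uses only $L\cap X\le Y$. The same computation shows that distinct $H_i$ are incomparable. Since $\ac_\chi$ forbids this antichain, some infinite subfamily has a $\chi$-join $LX_{(I)}$. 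Doing this for every partition shows that \emph{every} infinite set $\Lambda$ contains an infinite subset $\Lambda'$ with $L\langle X_\lambda:\lambda\in\Lambda'\rangle\in\chi$ and infinite complement. Indeed, we may first discard half of $\Lambda$ and apply the argument inside the rest.

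With this in hand, (ii) follows by taking $\widehat X=\langle X_\lambda\mid\lambda\in\Lambda'\rangle$; this is normal in $X$ because $X/Y$ is a direct product and $\widehat X\ge Y$. For the ``moreover'' part of (ii), apply the same argument with $L=1$, which certainly satisfies the hypotheses. This gives a $\chi$-subgroup $\widehat X$, and we can intersect or choose refinements as needed.

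For (i), split $\Lambda=\Lambda'\cup\Lambda''$ with both parts infinite and $L\widehat X'\in\chi$. Then apply the argument with $L=1$ to $\Lambda''$ to get an infinite $\Lambda'''\subseteq\Lambda''$ whose join $\widehat X''$ is in $\chi$ and is normal in $X$. Normalizing join-closure then gives $L\widehat X'\widehat X''\in\chi$. The obstacle is covering all of $\Lambda$. To handle it, I would arrange, via a further partition, that the leftover indices are absorbed: repeat the argument on $\Lambda\setminus\Lambda'$ with $L$ replaced by $L\widehat X'$. This is legitimate because $(L\widehat X')\cap X_{(\Lambda\setminus\Lambda')}\le Y$ by the modular law. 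Alternatively, choose $\Lambda'$ and its complement symmetrically so that both $L X_{\Lambda'}$ and $X_{\Lambda\setminus\Lambda'}$ (normal in $X$, hence normalized by $LX_{\Lambda'}$) are $\chi$; then $LX$ is their product. I expect this bookkeeping to be the main point to check.

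For (iii), pick disjoint infinite $\Lambda',\Lambda''$ with $LX_{\Lambda'},LX_{\Lambda''}\in\chi$. By the modular law, $LX_{\Lambda'}\cap LX_{\Lambda''}=L(X_{\Lambda'}\cap LX_{\Lambda''})=L(X_{\Lambda'}\cap (L\cap X)X_{\Lambda''})=L(X_{\Lambda'}\cap X_{\Lambda''})=LY$. Hence $LY\in\chi$ by finite-intersection closure.
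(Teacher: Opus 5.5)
Your first part of (ii) and your (iii) are correct and match the paper's arguments. You also make explicit the modular-law checks of independence and of $LX_{\Lambda'}\cap LX_{\Lambda''}=LY$, which the paper leaves implicit.

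\textbf{The gap in (i).} Two assertions do not follow: that $L\widehat X'$ normalizes $\widehat X''$, and that $X_{\Lambda\setminus\Lambda'}$ is ``normal in $X$, hence normalized by $LX_{\Lambda'}$''. The subgroup $L$ need not lie in $X$ and only permutes with the $X_\lambda$, so normality in $X$ says nothing about conjugation by $L$.

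More fundamentally, you never solve the covering problem you identify. $\ac_\chi$ only provides \emph{some} infinite subfamily with $\chi$-join, so you cannot prescribe its complement. Iterating with $L$ replaced by $L\widehat X'$ always leaves indices uncovered.

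The missing idea is to build the entire opposite block into every member of the family. Split $\Lambda=\Omega\sqcup\Gamma$ with both parts infinite, and apply $\ac_\chi$ to $(LX_\Omega X_\gamma)_{\gamma\in\Gamma}$ and to $(LX_\Gamma X_\omega)_{\omega\in\Omega}$. These families are admissible by your own independence computation, with $L$ replaced by $LX_\Omega$, resp.\ $LX_\Gamma$. Whatever subfamilies $\ac_\chi$ selects, the resulting $\chi$-joins satisfy $H_0\ge LX_\Omega$ and $K_0\ge LX_\Gamma$, so they generate $LX$. This is the paper's argument.

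For $L=1$, both $H_0$ and $K_0$ are normal in $X$, so normalizing join-closure shows that $X$ is a $\chi$-subgroup. For $L\ne 1$, neither need normalize the other. The paper's concluding step, that $\langle H_0,K_0\rangle=LX$ is a $\chi$-subgroup, therefore tacitly uses closure of $\chi$ under joins of two subgroups. That closure does hold for normality, almost and near normality, permutability and modularity.

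\textbf{The ``moreover'' part of (ii).} This is also not established. Rerunning the argument with $L=1$ yields an unrelated index set. ``Intersect or choose refinements'' is unavailable, since no intersection-closure is assumed there and $\ac_\chi$ gives no control over which subfamily is selected.

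What you need is a single $\widehat X$ such that both $\widehat X$ and $L\widehat X$ are $\chi$-subgroups. Once (i) is proved for $L=1$, apply it to the section $\widehat X/Y=\Dir_{\lambda\in\Lambda'}X_\lambda/Y$. Every join of infinitely many $X_\lambda$ is then a $\chi$-subgroup, so the $\widehat X$ from your first part already works.

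The paper instead regroups $X/Y$ into infinitely many blocks $W_\gamma/Y$, each a $\chi$-subgroup by (i), and reruns the first part of (ii) on the blocks. In either version the essential input is (i) with $L=1$.
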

\begin{proof}
Let $\Omega$ and $\Gamma$ be two disjoint infinite subsets of $\Lambda$ such that $\Lambda = \Omega \cup \Gamma$.
\begin{itemize}
    \item[(i)] Let $H=\langle X_\omega\rangle_{\omega\in\Omega}$ and $K=\langle X_\gamma\rangle_{\gamma\in\Gamma}$. Consider the families $(LHX_\gamma)_{\gamma\in\Gamma}$ and $(LKX_{\omega})_{\omega\in\Omega}$. By the property $\ac_\chi$, there exist subsets $\Omega_0\subset\Omega$ and $\Gamma_0\subset\Gamma$ such that $K_0=LK\langle X_\omega\rangle_{\omega\in\Omega_0}$ and $H_0=LH\langle X_\gamma\rangle_{\gamma\in\Gamma_0}$ are $\chi$-subgroups of $G$. Thus, $\langle H_{0}, K_0 \rangle = LX$ is a $\chi$-subgroup of $G$.
    \item[(ii)] Consider the family $(LX_\lambda)_{\lambda\in\Lambda}$. We observe that for every $\lambda\in\Lambda$,
    \[ LX_\lambda \cap L\langle X_\alpha \rangle_{\alpha\in\Lambda\setminus\{\lambda\}} = LY. \]
    The result follows from the definition of $\ac_\chi$. Assume now that $\chi$ is normalizing join-closed and write $X/Y=\Dir_{\lambda\in\Lambda} W_\lambda/Y$, where each $W_\lambda/Y$ is a direct product countably many of $X_\lambda$. By point (i), each $W_\lambda$ is a $\chi$-group which is clearly still permutable with $L$. Then, repeating the argument before, using $W_\gamma$ instead of $X_\lambda$, we have the result.
    \item[(iii)] By point (ii), there exist the subgroups $\widehat{H}\leq\langle X_\omega\rangle_{\omega\in\Omega}$ and $\widehat{K}\leq\langle X_\gamma\rangle_{\gamma\in\Gamma}$ such that $L\widehat{H}$ and $L\widehat{K}$ are $\chi$-subgroups of $G$. It follows that $LY=L\widehat{H} \cap L\widehat{K}$, is a $\chi$-subgroup.
\end{itemize}
\end{proof}

\begin{lem}\label{inftn}
Let $\chi$ be a subgroup property. Let $G$ be a group satisfying $\ac_\chi$ and let $X/Y$ be a section of $G$ that is the direct product of an infinite collection $(X_\lambda /Y)_{\lambda \in \Lambda}$ of non-trivial subgroups. The following hold.
    \begin{itemize}
        \item[(i)] If $\chi$ is finite-intersection-closed, then $Y$ is a $\chi$-subgroup of $G$ and $X_\lambda$ is a $\chi$-subgroup of $G$ for each $\lambda\in\Lambda$.
        \item[(ii)] If every ascendant $\chi$-subgroup of $G$ is normal in $G$, and $X$ is ascendant in $G$, then $X_\lambda$ is normal in $G$ for every $\lambda\in\Lambda$. Moreover, also $Y$ is normal in $G$.
    \end{itemize}
\end{lem}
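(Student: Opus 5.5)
For part (i) we apply Lemma~\ref{int1}(iii) with $L$ chosen suitably inside the section. Taking $L=1$ gives $L\cap X=1\le Y$, and $L$ trivially permutes with every $X_\lambda$. So Lemma~\ref{int1}(iii) (with $\chi$ finite-intersection-closed) yields that $LY=Y$ is a $\chi$-subgroup.

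For $X_\lambda$ we re-index the section. Fix $\lambda$ and split $\Lambda\setminus\{\lambda\}$ into infinitely many nonempty pieces, or simply keep the singletons $\{\mu\}$ for $\mu\ne\lambda$. Set $Y'=X_\lambda$, $X'=X$ and $X'_\mu=X_\lambda X_\mu$ for $\mu\neq\lambda$. Since $X_\lambda$ is normal in $X$ (it is the preimage of a direct factor), we get
\[X'/Y'\cong \Dir_{\mu\neq\lambda} X_\mu/Y,\]
which is an infinite direct product of nontrivial factors $X'_\mu/Y'$. Applying the previous sentence to this new section shows $Y'=X_\lambda$ is a $\chi$-subgroup. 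The only point to check is that $X_\lambda\trianglelefteq X$ and that the factors of $X/X_\lambda$ are the images of $X_\mu$, which is routine.

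For part (ii) we use Lemma~\ref{int1}(ii) with $L=1$. This gives a normal subgroup $\widehat X$ of $X$ (one for each choice of infinite subfamily) which is a $\chi$-subgroup. Since $\widehat X\trianglelefteq X$ and $X$ is ascendant in $G$, $\widehat X$ is ascendant in $G$, so the hypothesis forces $\widehat X\trianglelefteq G$. To reach each $X_\lambda$ we make the choice of $\widehat X$ precise. Partition $\Lambda$ as $\{\lambda\}\cup\Omega\cup\Gamma$ with $\Omega,\Gamma$ infinite, and apply $\ac_\chi$ to the family of subgroups $X_\lambda\langle X_\omega\rangle_{\omega\in\Omega_i}$, where $\Omega=\bigsqcup_i\Omega_i$ is a partition into infinitely many infinite sets. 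These subgroups are normal in $X$, hence mutually permutable. They form an antichain satisfying condition 1 of the definition, because their images in $X/Y$ are distinct direct-product pieces. By $\ac_\chi$, some infinite subjoin is a $\chi$-subgroup, namely $A=X_\lambda\langle X_\omega\rangle_{\omega\in\Omega'}$ for an infinite $\Omega'\subseteq\Omega$. The same argument with $\Gamma$ gives $B=X_\lambda\langle X_\gamma\rangle_{\gamma\in\Gamma'}$. Both $A$ and $B$ are normal in $X$, hence ascendant in $G$, hence normal in $G$.

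Finally $A\cap B=X_\lambda$, since modulo $Y$ the two subgroups intersect in $X_\lambda/Y$. So $X_\lambda$ is an intersection of two normal subgroups and is normal in $G$. Similarly $Y$ is normal, as it equals $X_\lambda\cap X_\mu$ for $\lambda\ne\mu$. The main obstacle is verifying the antichain condition 1 for the grouped family and that the joins are as claimed. This reduces to computing inside the direct product $X/Y$, where joins of factors containing $Y$ correspond to subproducts.
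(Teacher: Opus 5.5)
Your proof is correct and is essentially the paper's argument. In (i) you apply Lemma~\ref{int1}(iii) to the $Y$-case of the section $X/X_\lambda$, whereas the paper takes $L=X_\lambda$ over $\langle X_\alpha\rangle_{\alpha\neq\lambda}/Y$. In (ii) you intersect two $\chi$-subgroups coming from disjoint infinite subfamilies; they are normal in $X$, hence ascendant and so normal in $G$, and $Y=X_\lambda\cap X_\mu$ finishes. Adjoining $X_\lambda$ explicitly to each member $X_\lambda\langle X_\omega\rangle_{\omega\in\Omega_i}$ is exactly what makes the intersection equal to $X_\lambda$. The paper's literal ``$L=1$'' over $X/Y$ would only give an intersection inside $Y$, so your write-up is, if anything, the more careful version.
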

\begin{proof}
    \begin{itemize}
        \item[(i)] This is an immediate consequence of Lemma \ref{int1}(iii), choosing $L=Y$, or $X=\langle X_\alpha\rangle_{\alpha\in\Lambda\setminus\{\lambda\}}$ and $L=X_\lambda$.
        \item[(ii)] Let $\Omega$ and $\Gamma$ be two disjoint infinite subsets of $\Lambda\setminus\{\lambda\}$ such that $\Lambda\setminus\{\lambda\} = \Omega \cup \Gamma$. Using Lemma \ref{int1}(ii) with $L=1$, we find the subgroups $\widehat{H}\leq\langle X_\omega\rangle_{\omega\in\Omega}$ and $\widehat{K}\leq\langle X_\gamma\rangle_{\gamma\in\Gamma}$ such that $\widehat{H}$ and $\widehat{K}$ are $\chi$-subgroups of $G$. Since they are both ascendant in $G$, they are also normal in $G$ and so is their intersection $X_\lambda$. Finally, taking two different $\lambda,\mu$ in $\Lambda$, we have that $Y=X_\lambda\cap X_\mu$ is normal in $G$.
    \end{itemize}
\end{proof}


The following lemma is a straightforward consequence of Lemma \ref{int1}(i) and Lemma \ref{int1}(iii).

\begin{lem}\label{good}
    Let $\chi$ be a subgroup property which is normalizing join-closed and finite-inter\-section-closed. Let $G$ be a group and $A$ a subgroup of $G$ that is the direct product of an infinite family of non-trivial cyclic subgroups $(C_\lambda)_{\lambda\in \Lambda}$.
    If $G$ satisfies $\ac_\chi$, then every subgroup of $A$ is a $\chi$-subgroup. If in addition every $C_\lambda$ is permutable in $G$, then every cyclic subgroup of $G$ is a $\chi$-subgroup.
\end{lem}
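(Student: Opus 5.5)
Let $B\le A$ be arbitrary. The plan is to write $B$ as an intersection of two subgroups, each of the form $LX$ or $LY$ as in Lemma~\ref{int1}, and then use finite-intersection-closure.

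\textbf{Step 1: setting up.} Since $A=\Dir_{\lambda\in\Lambda}C_\lambda$ is abelian, every subgroup of $A$ permutes with every $C_\lambda$. Split $\Lambda=\Omega\cup\Gamma$ into two disjoint infinite subsets, and put
\[ A_\Omega=\Dir_{\omega\in\Omega}C_\omega,\qquad A_\Gamma=\Dir_{\gamma\in\Gamma}C_\gamma . \]
Each of these is again an infinite direct product of non-trivial cyclic groups.

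\textbf{Step 2: the key case $B\cap A_\Gamma=1$.} Take $L=B$ and the section $X/Y$ with $X=A_\Gamma$ and $Y=1$, decomposed as $(C_\gamma)_{\gamma\in\Gamma}$. The hypotheses of Lemma~\ref{int1} are satisfied: $L\cap X=1=Y$ and $LC_\gamma=C_\gamma L$ for every $\gamma$. By Lemma~\ref{int1}(iii), $LY=B$ is a $\chi$-subgroup.

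\textbf{Step 3: reducing a general $B$ to Step 2.} For arbitrary $B$, try to realise it as $LY$ with $Y$ a nontrivial subgroup. Take $Y=B\cap A_\Gamma$ and $X=A_\Gamma$. Then $X/Y$ must be an infinite direct product of non-trivial subgroups, which fails in general (for instance if $B\supseteq A_\Gamma$). This is the main obstacle, and I would get around it by choosing the decomposition adapted to $B$. Concretely, work inside $A$ and use that $B=\bigcap\{BA_\Omega, BA_\Gamma\}$ whenever $B\cap$ behaves well. A cleaner route is to note that every cyclic subgroup $\langle a\rangle$ of $A$ has finite support. Choose an infinite $\Gamma$ disjoint from that support, so $\langle a\rangle\cap A_\Gamma=1$, and Step 2 makes $\langle a\rangle$ a $\chi$-subgroup.

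Next, for a general $B$, use Lemma~\ref{int1}(i) with $L=B$ and $X=A_\Gamma$ to get $BA_\Gamma$ a $\chi$-subgroup whenever $B\cap A_\Gamma=1$. In the general case, pass to the quotient section $X/Y=A_\Gamma(B\cap A)/B$ where it is still an infinite product of nontrivial cyclic groups. Choose $\Gamma$ so that $BA_\Gamma/B$ decomposes this way, which is possible after replacing $A$ by a suitable subgroup. Then write $B=BA_{\Gamma_1}\cap BA_{\Gamma_2}$ for two such choices and conclude by intersection-closure. I expect this choice of complements in the abelian group, when $B$ has infinite support, to be the genuinely delicate point.

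\textbf{Step 4: the permutable case.} Assume each $C_\lambda$ is permutable in $G$, and let $\langle g\rangle$ be any cyclic subgroup of $G$. Then $\langle g\rangle C_\lambda=C_\lambda\langle g\rangle$ for every $\lambda$. Since $\langle g\rangle\cap A$ is cyclic, it lies in a finite sub-product $\Dir_{\lambda\in F}C_\lambda$. Choose disjoint infinite $\Omega,\Gamma\subseteq\Lambda\setminus F$. Then $\langle g\rangle\cap A_\Omega=1=\langle g\rangle\cap A_\Gamma$, so Lemma~\ref{int1}(iii) applies with $L=\langle g\rangle$, $X=A_\Omega$ (or $A_\Gamma$) and $Y=1$. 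It gives that $\langle g\rangle$ is a $\chi$-subgroup.
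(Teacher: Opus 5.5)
Your Steps~2 and~4 are correct, but Step~3 leaves a genuine gap: the first assertion is not proved for subgroups $B\le A$ that meet every infinite sub-product non-trivially. Step~2 does settle any $B$ with $B\cap A_\Gamma=1$ for some infinite $\Gamma$, in particular every finitely generated $B$, which lies in a finite sub-product. Step~4 settles the second assertion with $L=\langle g\rangle$, as the paper intends. The devices you sketch for the remaining subgroups do not work:
\begin{itemize}
\item The quotient $BA_\Gamma/B$ need not be an infinite direct product; it is trivial as soon as $A_\Gamma\le B$.
\item The identity $B=BA_{\Gamma_1}\cap BA_{\Gamma_2}$ is false in general. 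Take $\Gamma_1\cap\Gamma_2=\emptyset$, let $c_i\in C_{\lambda_i}$ have order $p$ with $\lambda_i\in\Gamma_i$, and put $B=\langle c_1c_2\rangle$. Then $c_1\in BA_{\Gamma_1}\cap BA_{\Gamma_2}$, but $c_1\notin B$.
\end{itemize}

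More fundamentally, take $A$ elementary abelian with basis $(c_\lambda)$. Let $B$ consist of all $\prod_\lambda c_\lambda^{e_\lambda}$ with $\sum_\lambda e_\lambda\equiv 0 \pmod p$ (or simply take $B=A$). Suppose $Y\le X\le A$ and $L\cap X\le Y$.
\begin{itemize}
\item If $LY=B$, the modular law gives $X\cap B=(X\cap L)Y=Y$, so $X/Y\cong XB/B\le A/B$.
\item If $L=B$, then $X\cap B\le Y$, so $X/Y$ is a quotient of $XB/B$.
\end{itemize}
Either way $|X/Y|\le p$. Hence no application of Lemma~\ref{int1}(iii), and none of Lemma~\ref{int1}(i) with $L=B$, can ever produce such a $B$. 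Intersection-closure cannot repair this.

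The missing ingredient is structural. By Kulikov's theorem, together with the fact that subgroups of free abelian groups are free, every subgroup of a direct product of cyclic groups is again a direct product of cyclic groups. So a subgroup $B\le A$ is either finitely generated, and your Step~2 applies, or it is itself the direct product of infinitely many non-trivial cyclic subgroups. In the second case, Lemma~\ref{int1}(i) with $L=1$, $X=B$, $Y=1$ shows directly that $B$ is a $\chi$-subgroup. This is where normalizing join-closure is genuinely needed; your proposal never uses it to obtain $B$ itself. This split — (i) for the infinitely generated subgroups, (iii) for the finitely generated ones and for cyclic subgroups of $G$ — is presumably what the paper means by calling the lemma a straightforward consequence of Lemma~\ref{int1}(i) and (iii).
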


We aim to apply these results to subgroup properties that generalize normality within the universe of radical groups and generalized radical groups.
Recall that a group $G$ is {\it radical} if it possesses a (normal) ascending series with locally nilpotent factors.
A group $G$ is {\it generalized radical} if it possesses a (normal) ascending series with factors that are locally finite or locally nilpotent.
In most cases, we can reduce the study of generalized radical groups with $\ac_\chi$ to the radical-by-finite case, as in \cite{Dardo}.

\begin{prop}\label{radbyfin}
    Let $\mathfrak{X}$ be a class of groups closed under taking subgroups and homomorphic images, such that any locally finite $\mathfrak{X}$-group is soluble-by-finite. Then any generalized radical $\mathfrak{X}$-group is a radical-by-finite $\mathfrak{X}$-group.
\end{prop}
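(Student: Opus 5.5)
Let $G$ be a generalized radical $\mathfrak{X}$-group and let $R$ be its Hirsch--Plotkin radical; more precisely, let $R$ be the largest radical normal subgroup of $G$, which exists because an ascending union of radical normal subgroups, and an extension of a radical normal subgroup by a radical normal section, are again radical. The plan is to show that $G/R$ is finite. Then $G$ is radical-by-finite, and it is trivially an $\mathfrak{X}$-group. Since $\mathfrak{X}$ is quotient-closed, $G/R$ is a generalized radical $\mathfrak{X}$-group. By maximality of $R$, $G/R$ has no non-trivial radical normal subgroup; in particular it has no non-trivial locally nilpotent normal subgroup. Hence the first non-trivial term of a generalized radical series of $G/R$ must be locally finite. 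So if $G/R\neq 1$, the group $G/R$ has a non-trivial locally finite normal subgroup.

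The next step is to bound the locally finite part. Let $T/R$ be the largest locally finite normal subgroup of $G/R$, which is well defined because products and ascending unions of locally finite normal subgroups are locally finite. Since $\mathfrak{X}$ is subgroup-closed, $T/R$ is a locally finite $\mathfrak{X}$-group, and so by hypothesis it is soluble-by-finite. Let $S/R$ be the soluble radical of $T/R$. It is characteristic in $T/R$, hence normal in $G/R$, and it is soluble, hence radical. Maximality of $R$ then forces $S=R$, so $T/R$ is finite. (Here I use that a soluble-by-finite group has a largest soluble normal subgroup, of finite index.)

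It remains to show $G=T$. Put $C/R=C_{G/R}(T/R)$. Then $C\cap T/R$ is the centre of $T/R$, which is abelian and normal in $G/R$, so it is trivial. Suppose $C/R\neq 1$. Then $C/R$ is a non-trivial normal subgroup of the generalized radical group $G/R$, and it is itself generalized radical, since normal subgroups inherit the series by intersection. Its first non-trivial series factor gives a non-trivial locally finite or locally nilpotent normal subgroup of $C/R$. I would take instead the characteristic pieces: the Hirsch--Plotkin radical of $C/R$, or its largest locally finite normal subgroup. These are characteristic in $C/R$ and hence normal in $G/R$. The locally nilpotent one is trivial by maximality of $R$. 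The locally finite one is normal locally finite in $G/R$, so it lies in $T/R\cap C/R=1$. This contradicts the existence of a non-trivial first factor, so $C/R=1$. Therefore $G/R$ embeds in $\mathrm{Aut}(T/R)$, which is finite, and hence $G/R$ is finite.

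The main obstacle is making sure the first non-trivial factor of a generalized radical series of $C/R$ actually yields a \emph{characteristic} (so $G$-normal) locally finite or locally nilpotent subgroup. I would handle this by noting that the first term of the series is a normal subgroup which is locally finite or locally nilpotent. It is therefore contained in the largest locally finite, respectively Hirsch--Plotkin, normal subgroup of $C/R$, and both of these are characteristic.
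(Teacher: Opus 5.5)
The paper gives no proof of this proposition: it is stated without argument, as the reduction already used in \cite{Dardo}, so there is no in-text proof to compare yours with. Your argument is correct and complete, and it is the standard route. You take $R$ to be the largest radical normal subgroup. The locally finite radical $T/R$ of $G/R$ is soluble-by-finite with trivial soluble radical, hence finite. The Hirsch--Plotkin radical and the locally finite radical of $C_{G/R}(T/R)$ are both normal in $G/R$, hence trivial, so $C_{G/R}(T/R)=1$ and $G/R$ embeds in the finite group $\mathrm{Aut}(T/R)$.

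Two minor remarks:
\begin{itemize}
\item Your preliminary observation that $G/R\neq 1$ forces a non-trivial locally finite normal subgroup is not needed, since the centralizer argument also covers $T=R$.
\item Calling $R$ ``the Hirsch--Plotkin radical'' is a misnomer. $R$ is the last term of the upper Hirsch--Plotkin series $(\rho_\alpha(G))$. Its existence is cleanest via $\rho_\alpha(N)=N\cap\rho_\alpha(G)$ for $N\trianglelefteq G$: every radical normal subgroup lies in that last term, which is itself radical.
\end{itemize}
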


\section{Almost normal and nearly normal subgroups}

We now apply the general strategy from the preliminaries to the posets of almost normal and nearly normal subgroups. Let $G$ be a group and $H$ a subgroup of $G$. The subgroup $H$ is said to be {\it almost normal} in $G$ if the normalizer $N_G(H)$ has finite index in $G$. We denote this by $H \operatorname{an} G$. On the other hand, the subgroup $H$ is said to be {\it nearly normal} in $G$ if it has finite index in its normal closure $H^G$. We denote this by $H \operatorname{nn} G$. One obvious, yet important fact is that if a cyclic subgroup of $G$ is either almost normal or nearly normal, then it is contained in the $FC$-centre of $G$.

It is easy to verify that the properties $an$ and $nn$ are finite-intersection-closed and normalizing join-closed, so we shall be able to apply the preceding results as needed.

Let $G$ be a group. The set of all $FC$-elements of $G$ is a subgroup and is usually denoted by $FC(G)$. If $G$ is abelian, the number $r=r_0(G)+\sum_{p\in \mathbb{P}}r_p(G)$ will be said to be the {\it total rank} of $G$, where $r_0$ and $r_p$ are the usual Prüfer torsion-free rank and $p$-rank of $G$, respectively.

\begin{prop}\label{fc}
    Let $G$ be a group satisfying $\ac_{an}$ (resp. $\ac_{nn}$). If $G$ is an $FC$-group, then $G$ is central-by-finite (resp. finite-by-abelian).
\end{prop}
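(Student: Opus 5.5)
The plan is to reduce the statement to B.H. Neumann's classical characterizations. A group in which every subgroup is almost normal is central-by-finite, and a group in which every subgroup is nearly normal is finite-by-abelian. So it suffices to show that an $FC$-group $G$ satisfying $\ac_{an}$ (resp. $\ac_{nn}$) has all its subgroups almost normal (resp. nearly normal). I will argue by contradiction: assume $G/Z(G)$ is infinite (resp. $G'$ is infinite) and produce an infinite direct-product section to which Lemma~\ref{int1} and Lemma~\ref{good} apply. Recall that $an$ and $nn$ are finite-intersection-closed and normalizing join-closed, so all parts of those lemmas are available.

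\textbf{Step 1: an infinite direct-product section.} Use the standard structure of $FC$-groups: $G/Z(G)$ is periodic and residually finite, $G'$ is periodic, and every finite subset of $G$ has finite normal closure modulo the centre. If $G/Z(G)$ (resp. $G'$) is infinite, build inductively finitely generated subgroups $E_1,E_2,\dots$ with the following property: the normal closures $N_n=E_n^G$ generate a section $X/Y$, with $Y\le Z(G)$ (resp. $Y$ a suitable normal subgroup meeting $G'$ in a controlled way), which is the direct product of the non-trivial factors $N_nY/Y$. The induction step chooses $E_{n+1}$ outside a finite-index subgroup: it centralizes $N_1\cdots N_n$ modulo $Y$ in the almost normal case, and avoids the finite subgroup $(N_1\cdots N_n)'$ in the nearly normal case. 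Residual finiteness of $G/Z(G)$, together with the finiteness of normal closures, is what makes this choice possible at every step.

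\textbf{Step 2: every subgroup is $\chi$.} Let $L\le G$ be arbitrary. Finitely generated subgroups of an $FC$-group are already almost normal and nearly normal, so we may assume $L$ is infinitely generated. Refine the construction of Step~1 so that $L\cap X\le Y$, by choosing each $E_n$ to avoid the (finitely many relevant) elements of $L$ met so far. Since the $N_n$ are normal, $L$ permutes with every $N_nY$. Lemma~\ref{int1}(iii) then gives that $LY$ is a $\chi$-subgroup. In the case where $Y$ is central, combining this with the fact that $L\cap Y$ is an $FC$-subgroup with finite-index normalizer data lets us descend from $LY$ to $L$. If this descent fails for an arbitrary $L$, I will instead take $L$ to be a subgroup that witnesses the failure of Neumann's condition (an infinite abelian subgroup, or a subgroup generated by the $E_n$ chosen from the beginning). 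Applying Lemma~\ref{int1}(i) to it yields that $L$ is $\chi$ while also forcing $|N_G(L):\cdots|$ (resp. $|L^G:L|$) to be infinite, which is a contradiction. Lemma~\ref{good} handles the subcase where the section is abelian of infinite total rank: it makes every subgroup of it a $\chi$-subgroup, which is incompatible with the construction of Step~1.

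\textbf{Main obstacle.} The delicate point is Step~1 and its refinement in Step~2: producing an infinite direct-product section $X/Y$ that simultaneously (a) has normal, hence $L$-permutable, factors, (b) meets $L$ only inside $Y$, and (c) still witnesses the infinitude of $G/Z(G)$ (resp. $G'$). My first attempt is to work inside $G/Z(G)$ (resp. with finite normal subgroups contained in $G'$) using residual finiteness to separate successive pieces. I would then lift back to $G$, checking that the intersection with $L$ can be pushed into $Y$ at each inductive stage.
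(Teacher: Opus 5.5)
\paragraph{There is a genuine gap, in Step~2.} Neumann's criteria need \emph{every} subgroup $L$ to be a $\chi$-subgroup, and nothing you propose establishes that.

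Your only mechanism, Lemma~\ref{int1}(iii), shows that $LY$ is a $\chi$-subgroup. The descent from $LY$ to $L$ for central $Y$ is false in general. Take the infinite extraspecial group of exponent $p$ with generators $x_i,y_i$ ($i\in\mathbb{N}$) and $[x_i,y_i]=z$, and let $L=\langle x_i\mid i\in\mathbb{N}\rangle$. Then $LZ(G)\trianglelefteq G$. Yet the conjugates $L^{y_j}=\langle x_i\ (i\neq j),\,x_jz\rangle$ are pairwise distinct, so $L$ is not almost normal. Your sketch gives no argument using $\ac_{an}$ that would rescue this step.

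There are further problems with Step~2:
\begin{itemize}
\item Choosing each $E_n$ to avoid finitely many elements of $L$ does not give $L\cap X\le Y$. That is a condition on a whole subgroup, which can contain products of elements from several $N_n$.
\item In the nearly normal case $Y$ is never specified.
\item The claimed incompatibility with Lemma~\ref{good} is not argued.
\item Your fallback of taking an abelian witness is not supplied by Neumann's theorems. That an $FC$-group with $G/Z(G)$ (resp.\ $G'$) infinite must contain a non-almost-normal (resp.\ non-nearly-normal) \emph{abelian} subgroup is precisely the content of the theorems of Eremin and Tomkinson.
\end{itemize}

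\paragraph{The missing idea} is to aim at Eremin's and Tomkinson's theorems instead of Neumann's. They say that for $FC$-groups it suffices that every abelian subgroup be almost normal (resp.\ nearly normal). Then no external $L$, no disjointness condition and no descent are needed.

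First reduce to the periodic case, using that $G'$ and $G/Z(G)$ are periodic. Let $A$ be an abelian subgroup.
\begin{itemize}
\item If $A$ is not \v{C}ernikov, it has a homomorphic image $A/Y$ that is the direct product of infinitely many non-trivial subgroups. Lemma~\ref{int1}(i) with $L=1$ and $X=A$ then shows directly that $A$ itself is almost normal (resp.\ nearly normal). This is the one usable idea buried in your fallback.
\item If $A$ is \v{C}ernikov, its finite residual is divisible, and $G/Z(G)$ is residually finite, so the residual lies in $Z(G)$. Hence $A/A_G$ is finite, and Dietzmann's lemma gives $A^G/A_G$ finite. So $A$ is both almost normal and nearly normal.
\end{itemize}
Eremin's and Tomkinson's theorems then give the conclusion; this is the paper's proof.
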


\begin{proof}
Since $G'$ and $G/Z(G)$ are periodic, we can assume without loss of generality that $G$ is periodic. Let $A$ be an abelian subgroup of $G$. If $A$ is not \v{C}ernikov, then $A$ has a homomorphic image of infinite total rank. Thus, by Lemma \ref{int1}(i) with $L=1$, $A$ is almost normal (resp. nearly normal) in $G$. If on the other hand $A$ is a \v{C}ernikov group, then its finite residual is contained in $Z(G)$ and hence $A/A_G$ is finite, so by Dietzmann's Lemma, $A^G/A_G$ is finite, too, implying that $A$ is both almost normal and nearly normal in $G$. The result then follows from the well-known results of Eremin (see \cite{Er}) and Tomkinson (see \cite{Tom}).
\end{proof}

We begin by noting that, when the centre of the $FC$-centre has infinite total rank, the structure of a group satisfying $\ac_{an}$ (resp. $\ac_{nn}$) is very restricted.

\begin{theo}\label{best}
Let $G$ be a group satisfying $\ac_{an}$ (resp. $\ac_{nn}$). If $Z(FC(G))$ has infinite total rank, then $G$ is central-by-finite (resp. finite-by-abelian).
\end{theo}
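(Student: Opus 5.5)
The plan is to show first that $G$ is an $FC$-group, i.e. $G=FC(G)$; then Proposition~\ref{fc} gives the conclusion at once. Put $A=Z(FC(G))$. This is an abelian normal subgroup of $G$, and it has infinite total rank.

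\textbf{Step 1: a direct product of cyclic groups inside $A$.} Because $A$ has infinite total rank, either $r_0(A)=\infty$ or $r_p(A)>0$ for infinitely many primes $p$ (or $r_p(A)=\infty$ for some $p$). In each case I will choose a subgroup $B\le A$ that is the direct product of an infinite family $(C_\lambda)_{\lambda\in\Lambda}$ of non-trivial cyclic groups:
\begin{itemize}
\item infinitely many independent elements of infinite order, if $r_0(A)=\infty$;
\item elements of order $p$ for infinitely many distinct primes $p$, if infinitely many $r_p(A)$ are positive;
\item an infinite elementary abelian $p$-subgroup, if $r_p(A)=\infty$.
\end{itemize}
By Lemma~\ref{good}, every subgroup of $B$ is almost normal (resp. nearly normal) in $G$. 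This alone only says something about elements of $A$, so I need a device that reaches an arbitrary element of $G$.

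\textbf{Step 2: moving an arbitrary element into a good position.} Fix $g\in G$ and set $L=\langle g\rangle$. The aim is to apply Lemma~\ref{int1}(iii) with $X/Y$ a suitable section built from $B$. Every element of $B$ lies in $FC(G)$, so for each $\lambda$ the normal closure $C_\lambda^{\langle g\rangle}$ is a finitely generated subgroup of $A$, and it is $g$-invariant. Hence $L$ permutes with $C_\lambda^{\langle g\rangle}$.

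Using these finitely generated $\langle g\rangle$-invariant pieces, I will refine the family by a greedy or diagonal selection. The target is an infinite family $X_\lambda=D_\lambda Y$ in which:
\begin{itemize}
\item the $D_\lambda$ are $g$-invariant subgroups of $A$;
\item $X/Y=\Dir_\lambda X_\lambda/Y$ with each factor non-trivial;
\item $L\cap X\le Y$.
\end{itemize}

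For the condition $L\cap X\le Y$, take $Y=(L\cap A)^{\ast}$, the subgroup generated by $L\cap A$ together with some finite controlled part. Note that $L\cap A$ is cyclic and central in $FC(G)$. Working modulo it, I choose the $D_\lambda$ inductively. At each stage the rank still available, or in the primes case the remaining primes, is infinite, while only a finitely generated subgroup has been used so far. So one can always pick a new non-trivial $g$-invariant finitely generated piece that is independent modulo $Y$ from the previous ones.

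Lemma~\ref{int1}(iii) then yields that $LY$ is almost normal (resp. nearly normal). Choosing $Y$ so that $LY/L$ is finite, or so that $Y$ is itself good by Lemma~\ref{good}, gives that $\langle g\rangle$ has a normalizer of finite index, resp. has finite index in its normal closure. Either way $g\in FC(G)$.

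\textbf{Step 3: conclusion and main obstacle.} Step 2 gives $G=FC(G)$, and Proposition~\ref{fc} finishes the proof.

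The main obstacle is the selection in Step 2: producing an infinite $g$-invariant direct decomposition modulo $Y$ while keeping $\langle g\rangle\cap X$ inside $Y$ and keeping $Y$ small enough that information about $LY$ descends to $\langle g\rangle$. In the torsion-free case, the action of $g$ on a finitely generated invariant piece lies in $GL_n(\mathbb{Z})$, and independence modulo earlier pieces should follow from rank counting. In the periodic case with infinitely many primes, the primary components are characteristic, so the choice is easy. The case of infinite $p$-rank for a single $p$ needs an argument that the finite $g$-orbits of socle elements generate an infinite direct product of finite $g$-invariant subgroups; this is where I expect to spend the most effort.
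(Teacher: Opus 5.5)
\textbf{There is a genuine gap, in Step~2.} Your reduction matches the paper's treatment of the almost normal case: show that every cyclic subgroup $\langle g\rangle$ is almost (resp.\ nearly) normal via Lemma~\ref{int1}(iii), so that $G=FC(G)$, and finish with Proposition~\ref{fc}. The problem is the selection in Step~2, which carries the whole proof. You support the existence of infinitely many non-trivial $\langle g\rangle$-invariant subgroups of $A$ forming a direct product (modulo $Y$) only by the finiteness of $\langle g\rangle$-orbits and by rank counting. These do not suffice.

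\emph{The case $r_p(A)=\infty$.} Let $V$ be elementary abelian with basis $e_1,e_2,\dots$, and let $g$ act by $e_1\mapsto e_1$ and $e_n\mapsto e_n+e_{n-1}$ for $n\ge 2$. Since $g^{p^m}=1+(g-1)^{p^m}$ fixes $e_1,\dots,e_{p^m}$, all orbits are finite; indeed $V=Z(FC(V\rtimes\langle g\rangle))$. Yet if $w\neq 0$ has largest basis index $n$, then $(g-1)^{n-1}w$ is a non-zero multiple of $e_1$. So every non-trivial $g$-invariant subgroup contains $e_1$, and no two of them intersect trivially.

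\emph{The torsion-free case.} Rank counting also falls short here: an element independent of the pieces already chosen can have $\langle g\rangle$-closure meeting them. For instance, let $g$ swap $e_1,e_2$, choose $\langle e_1+e_2\rangle$ first and then $e_1$. What you need is a $g$-invariant complement. Maschke's theorem over $\mathbb{Q}$ supplies one, because $g$ acts with finite order on every finitely generated $g$-invariant subgroup of $A$. That argument is not in your proposal, and it has no analogue in characteristic $p$, as the example above shows.

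\textbf{The missing idea.} You need a second use of the hypothesis, through the fact you obtain in Step~1 and then set aside: every subgroup of $B$ is almost (resp.\ nearly) normal in $G$. This is what rules out actions like the one above. There, $\langle e_{p^{2j}+1}\mid j\ge 1\rangle$ is normalized by no non-trivial power of $g$ and has infinite index in its normal closure $V$.

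The paper uses this fact to produce $G$-invariant pieces. For a finitely generated $F\le D$, the normal closure $F^G$ is finitely generated. Writing $D=F^G\times E$, the core $E_G$ still has infinite total rank. Iterating gives infinitely many non-trivial $G$-invariant subgroups forming a direct product. Being $G$-invariant, they serve every $g$ at once: discard the finitely many factors meeting $\langle g\rangle$ and apply Lemma~\ref{int1}(iii) with $Y=1$. Then $LY=L$, so no descent is needed. Your alternative of taking $Y$ itself almost normal would not let you pass from $LY$ to $L$ in any case.

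For nearly normal subgroups the paper argues differently. It obtains sections all of whose subgroups are normal in $G$: directly when $D/T_D$ has infinite rank, and via \cite[Lemma 2.7]{Cas} for the periodic part. It then concludes with Lemma~\ref{good} and Proposition~\ref{fc}.
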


\begin{proof}
Let $Z=Z(FC(G))$, let $T$ be the torsion subgroup of $Z$ and let $D=\Dir_{i\in \mathbb N}C_i$ be a direct product of cyclic subgroups of $Z$, which exists by hypothesis.

\textbf{Case 1: $G$ satisfies $\ac_{an}$}.

By Lemma \ref{good}, every subgroup of $D$ is almost normal in $G$. In particular, for every finite subgroup $F$ of $D$, $F^G$ is finitely generated and, writing $D=F^G\times E$, $E_G$ has still infinite total rank; then $D$ is the direct product of infinitely many $G$-invariant subgroups. Then every cyclic subgroup of $G$ is almost normal by Lemma \ref{int1}(iii) and hence $G$ is an $FC$-group. The conclusion now follows from Proposition \ref{fc}.

\textbf{Case 2: $G$ satisfies $\ac_{nn}$}.

By Lemma \ref{good}, every subgroup of $D$ is nearly normal in $G$. Let $T_D$ be the torsion subgroup of $D$. If $D/T_D$ has infinite rank, one easily gets that every subgroup of $D/T_D$ is normal in $G$, so $G/T_D$ is an $FC$-group by Lemma \ref{good}, and $G'T_D/T_D$ is finite by Proposition \ref{fc}. In particular, if $T_D$ is finite, we are done, so we may assume that $T_D$ is infinite. In this case, \cite[Lemma 2.7]{Cas} yields the existence of a finite $G$-invariant subgroup $N$ of $T$ such that every subgroup of $T/N$ is normal in $G$, so, as before, $G'$ is finite.
\end{proof}

The next theorem is the first global step: it shows that, in the presence of $\ac_{an}$ (or $\ac_{nn}$), radical-by-finite groups are forced into a minimax-type alternative. This will be found frequently in the rest of the paper.

\begin{theo}\label{minan}
Let $G$ be a radical-by-finite group satisfying $\ac_{an}$ (resp. $\ac_{nn}$). Then every non-minimax subgroup of $G$ is almost normal (resp. nearly normal) in $G$.
\end{theo}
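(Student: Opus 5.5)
Let $H$ be a non-minimax subgroup of $G$; we want to show that $H$ is almost normal (resp. nearly normal). The plan is to reduce to a situation where Lemma~\ref{int1} applies with $L=H$. That requires a section $X/Y$ of $G$ which is an infinite direct product of non-trivial subgroups $X_\lambda/Y$, with $H\cap X\le Y$ and $HX_\lambda=X_\lambda H$ for all $\lambda$. Once such a section is available, Lemma~\ref{int1}(iii) (both $an$ and $nn$ are finite-intersection-closed) shows that $HY$ is a $\chi$-subgroup. In the good cases we can arrange $Y\le H$, which gives the conclusion. In practice the cleanest route is to produce such sections \emph{inside} $H$ itself; that is, we take $L=1$ and $X,Y\le H$ and use Lemma~\ref{int1}(i),(iii) to control $H$ directly.

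\textbf{Step 1 (structure of non-minimax radical-by-finite groups).} Since $H$ is radical-by-finite and not minimax, the standard results on soluble and radical minimax groups (Robinson, Baer--Zaitsev) give the following: $H$ has an abelian section $U/V$ with $V\trianglelefteq U\le H$ which has infinite total rank, or which is not minimax. From this, via a standard reduction, we obtain a section of $H$ of the form $\Dir_{i\in\mathbb N}X_i/Y$, with $Y\le X\le H$, each $X_i/Y$ non-trivial, and $Y$ normal in $H$. Concretely, we choose the torsion part of an abelian section of infinite rank, or an infinite direct product of cyclic groups in a non-minimax abelian factor. For the torsion-free non-minimax case, we pass to $A/A^p$ or to quotients by free subgroups to obtain an infinite elementary section.

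\textbf{Step 2 (applying the lemmas).} Write $\Lambda=\Omega\cup\Gamma$ with $\Omega,\Gamma$ disjoint and infinite. We apply Lemma~\ref{int1}(ii) and (iii) with $L$ replaced by suitable subgroups. Using $L=Y$, Lemma~\ref{inftn}(i) gives that $Y$ and every $X_\lambda$ are $\chi$-subgroups. To pass from $X$ to $H$, we vary the choice of section so that $H$ becomes a join of $\chi$-subgroups normalizing one another. First, every element $h\in H$ lies in some non-minimax subgroup $\langle h,X\rangle$. Then we consider $L=\langle h\rangle Y$ permuting with the $X_\lambda$; this can be arranged by refining the decomposition to be $\langle h\rangle$-invariant, using that $\langle h\rangle$ acts on an infinite-rank abelian section and splits it into infinitely many finitely generated invariant pieces. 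With this, Lemma~\ref{int1}(i) shows that $\langle h\rangle X$ is a $\chi$-subgroup. Finally, $H=\bigcup_h \langle h\rangle X$. Because almost normality (resp. near normality) of an ascending union of subgroups containing the fixed $\chi$-subgroup $X$ is not automatic, we instead build a single section of $H$ with $H\cap X\le Y$ for some $X$ outside $H$ permuting with $H$, and apply Lemma~\ref{int1}(iii) to get $HY$, hence $H$, in $\chi$.

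\textbf{Main obstacle.} The difficulty is exactly in the last step: passing from the infinite-rank section to the whole subgroup $H$. Permutability is needed, so we must find infinitely many $H$-invariant (or at least $H$-permutable) pieces. The approach we would try first is to use Theorem~\ref{best}: if $Z(FC(G))$ has infinite total rank, the conclusion is immediate. Otherwise we would work in a chief-factor-like abelian normal section of $G$ of infinite rank. There, finite $G$-orbits (from the $FC$-property forced by Lemma~\ref{good}) provide infinitely many $G$-invariant direct factors, which are normal and hence permute with $H$. If $H\cap X$ is not contained in $Y$, we replace $X$ by a complement-type subproduct avoiding $H$ modulo $Y$, which is possible by the infinite rank. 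Then $H\le HY$, and since $Y$ can be chosen inside $H$ or intersected away via two disjoint subfamilies as in Lemma~\ref{int1}(iii), we conclude that $H$ is almost normal (resp. nearly normal).
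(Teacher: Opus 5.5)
\textbf{There is a genuine gap.} Your proposal does not reach a proof. Each mechanism you offer for passing from an infinite-rank section inside $H$ to $H$ itself breaks down.

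The refinement in Step~2 assumes that $\langle h\rangle$, acting on an infinite-rank abelian section, splits it into infinitely many invariant direct factors. This is false in general. Under multiplication by $t$ on $\mathbb{F}_p[t,t^{-1}]$, the invariant subgroups are the ideals of a domain, so no two nonzero ones intersect trivially. Any such splitting would have to be extracted from $\ac_\chi$, which you do not do. As you note yourself, the union $\bigcup_h\langle h\rangle X$ would not yield $H\in\chi$ anyway.

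The fallback builds a section $X/Y$ with $H\cap X\le Y\le H$ from $G$-invariant factors and applies Lemma~\ref{int1}(iii) with $L=H$. This cannot work when $H$ contains the non-minimax part of $G$. Let $Q$ be the additive group of rationals with squarefree denominators, and let $G=Q\times\langle z\rangle\times S_3$ with $z$ of infinite order.
\begin{itemize}
\item $G$ is central-by-finite, so it satisfies $\ac_{an}$ and $\ac_{nn}$ vacuously.
\item $Z(FC(G))=Q\times\langle z\rangle$ has total rank $2$, so your appeal to Theorem~\ref{best} does not apply.
\item For the non-minimax subgroup $H=Q\times\langle(12)\rangle$, any $X$ with $H\cap X\le Y$ satisfies $X\cap Q\le Y$. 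So $X/Y$ is a section of $G/Q\cong\langle z\rangle\times S_3$, hence minimax, hence not an infinite direct product.
\end{itemize}
The central infinite-rank section $Q/\mathbb{Z}$ exists, but it lies inside $H$, so no ``complement-type subproduct avoiding $H$'' exists. Separately, finite $G$-orbits do not by themselves produce infinitely many $G$-invariant direct factors.

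You do mention the right tool, $L=1$ with $X,Y\le H$. The missing idea is to arrange $X=H$ itself in Lemma~\ref{int1}(i), where permutability is automatic. This requires controlling $G'$ globally first, which is what the paper does.
\begin{itemize}
\item Take an abelian non-minimax $A$ and a free $B\le A$ with $A/B$ periodic.
\item If $B$ has infinite rank, Lemma~\ref{good} puts $B$ in $FC(G)$. Since $FC$-groups are central-by-periodic, $Z(FC(G))$ has infinite total rank, and Theorem~\ref{best} finishes.
\item Otherwise $B$ is finitely generated and $A/B$ has infinite socle. Lemma~\ref{inftn}(i) then makes $B$ almost normal in either case.
\item So $C=B^G$ is finitely generated, and Theorem~\ref{best} applied to $N_G(B)/B$ and to $G/C$ shows that $G'$ is polycyclic-by-finite.
\end{itemize}
Hence for every non-minimax $H$, the abelian group $H/H'$ is non-minimax. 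So $H$ has a quotient $H/Y$ that is an infinite direct product of non-trivial subgroups, and Lemma~\ref{int1}(i) with $L=1$ and $X=H$ gives $H\in\chi$ at once.

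In the example above, this is exactly how $H$ is handled. Take $Y=\mathbb{Z}\times\langle(12)\rangle$ with $\mathbb{Z}\le Q$; then $H/Y\cong Q/\mathbb{Z}$ is an infinite direct product of groups of prime order.
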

\begin{proof}
Let $A$ be an abelian non-minimax subgroup of $G$, which we may find by \cite[Theorem 10.35]{Rob72}, and let $B$ be a free subgroup of $A$ such that $A/B$ is periodic. If $B$ has infinite total rank, then by Lemma \ref{good} it is contained in $FC(G)$. Since $FC$-groups are central-by-periodic, $Z(FC(G))$ has infinite total rank and the thesis follows from Theorem \ref{best}. Thus, we can suppose that $B$ is finitely generated, so that $A/B$ has an infinite socle $S/B$. Now, Lemma \ref{inftn}(i) yields that $B$ is almost normal (resp. nearly normal). If $B$ is nearly normal, $|B^G:B|=n$, say, we have that $(B^G)^{n!}\leq B$, so $B^G/B_G$ has finite exponent and hence it is finite. Therefore, in any case $B$ is almost normal. In particular, $C=B^G$ is finitely generated and $N_G(B)/B$ is finite-by-abelian by Theorem \ref{best}. Then $(C \cap N_G(B))/B$ is polycyclic-by-finite and so is $C$. However, since $C$ is finitely generated, $SC/C$ is still infinite and hence by Theorem \ref{best} we have that $G'C/C$ is finite and so $G'$ is polycyclic-by-finite. In particular, every non-minimax subgroup $H$ of $G$ has polycyclic-by-finite commutator subgroup and hence $H/H'$ contains a homomorphic image of infinite total rank. Therefore, $H$ is almost normal (resp. nearly normal) in $G$ by Lemma \ref{int1}(i) with $L=1$.
\end{proof}

\medskip

We now prove the main theorem of this section.

\begin{theo}\label{thannn}
    Let $G$ be a generalized radical group. The following conditions are equivalent:
    \begin{itemize}
        \item[(i)] $G$ satisfies Min-$\infty$ on non-almost normal (resp. non-nearly normal) subgroups;
        \item[(ii)] $G$ satisfies Max-$\infty$ on non-almost normal (resp. non-nearly normal) subgroups;
        \item[(iii)] $G$ satisfies $\DCCinf$ on non-almost normal (resp. non-nearly normal) subgroups;
        \item[(iv)] $G$ has almost normal (resp. nearly normal) deviation;
        \item[(v)] $G$ satisfies RCC on non-almost normal (resp. non-nearly normal) subgroups;
        \item[(vi)] $G$ satisfies $\ac_{an}$ (resp. $\ac_{nn}$);
        \item[(vii)] Every non-minimax subgroup of $G$ is almost normal (resp. nearly normal).
    \end{itemize}
\end{theo}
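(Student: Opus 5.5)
The plan is to close a cycle of implications, with the easy chain-condition implications handled formally and the real content placed in (vi)$\Rightarrow$(vii).

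\emph{Formal implications.} A strictly descending (resp. ascending) chain in which each term has infinite index in the previous one is a subposet of type $\mathbb{Z}$ with infinite indices. So (i)$\Rightarrow$(iii) and (ii)$\Rightarrow$(iii) are immediate. Part 1) of the Remark gives (iii)$\Rightarrow$(iv), part 2) gives (iv)$\Rightarrow$(v), and Lemma~\ref{rob} gives (v)$\Rightarrow$(vi). It then suffices to prove (vi)$\Rightarrow$(vii) and (vii)$\Rightarrow$(i),(ii).

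\emph{(vii)$\Rightarrow$(i),(ii).} Assume every non-minimax subgroup is almost normal (resp. nearly normal).
\begin{itemize}
\item If $G$ itself is not minimax, we show that every subgroup is $\chi$. Let $H\le G$ and choose an abelian non-minimax subgroup $A$, which exists by \cite[Theorem 10.35]{Rob72}, provided we have first reduced to $G$ radical-by-finite as in the next step. Then $HA$, or more carefully a suitable non-minimax subgroup built from $H$ and a section of $A$ of infinite total rank, is $\chi$. Arguing as in Lemma~\ref{int1}(iii) with two "halves" of $A$, $H$ is an intersection of two $\chi$-subgroups, hence $\chi$. The set of non-$\chi$ subgroups is then empty and the conditions hold trivially.
\item If $G$ is minimax, any chain with infinite indices has bounded length, because the relevant invariant (torsion-free rank plus the number of Prüfer factors) strictly decreases along infinite-index steps in a soluble-by-finite minimax group. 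Hence Min-$\infty$ and Max-$\infty$ hold outright. Here I would use that generalized radical minimax groups are soluble-by-finite.
\end{itemize}

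\emph{(vi)$\Rightarrow$(vii).} The key step is to reduce to the radical-by-finite case via Proposition~\ref{radbyfin}, applied to the class $\mathfrak{X}$ of groups satisfying $\ac_{an}$ (resp. $\ac_{nn}$).
\begin{itemize}
\item Closure under subgroups is not quite automatic, since $\chi$ is relative to the ambient group. I would instead take $\mathfrak{X}$ to be the class of groups all of whose sections satisfy the condition, or observe that non-$\chi$ in a subgroup implies non-$\chi$ in $G$ for $an$ and $nn$. Indeed, if $N_G(H)$ has finite index in $G$, then $N_K(H)$ has finite index in $K$; and if $H^G/H$ is finite, so is $H^K/H$. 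Hence a bad antichain in $K$ is bad in $G$. For quotients, the lattice correspondence preserves almost and near normality.
\item Next we show that a locally finite group with $\ac_{an}$ is soluble-by-finite. Let $G$ be locally finite and not soluble-by-finite. By standard results (\v{S}unkov, Kegel–Wehrfritz), it contains an infinite abelian subgroup, hence one with infinite socle $D$. Lemma~\ref{good} puts $D$ in the $FC$-centre, and Theorem~\ref{best} then forces $G$ to be central-by-finite (resp. finite-by-abelian), a contradiction.
\end{itemize}

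I expect this reduction to be the main obstacle. In particular, one must handle locally finite groups whose abelian subgroups are all \v{C}ernikov, which are then \v{C}ernikov by \v{S}unkov's theorem and hence soluble-by-finite. Once $G$ is radical-by-finite, Theorem~\ref{minan} gives (vii) directly.
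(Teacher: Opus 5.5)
The formal chain (i),(ii)$\Rightarrow$(iii)$\Rightarrow$(iv)$\Rightarrow$(v)$\Rightarrow$(vi) is fine. Your (vi)$\Rightarrow$(vii) follows the paper's route: closure under subgroups and quotients, then Proposition~\ref{radbyfin}, then Theorem~\ref{minan}.

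\textbf{The gap.} It is in (vii)$\Rightarrow$(i),(ii). You claim that a non-minimax group satisfying (vii) has all subgroups almost normal (resp.\ nearly normal). This is false, even under $\ac_{an}$.

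Let $G=\langle x,y\rangle Q$, where $Q\cong\mathbb{Q}$ is central, $c=[x,y]$ is a non-trivial element of $Q$, and $G/Q$ is free abelian on $xQ,yQ$. This is the integral Heisenberg group with its centre identified with $\mathbb{Z}\le\mathbb{Q}$. Then $G$ is nilpotent and not minimax.

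\begin{itemize}
\item If $K\le G$ is not minimax, then $K\cap Q$ is not minimax, because $K/(K\cap Q)$ is finitely generated. Hence $c^u\in K$ for some $u\ge 1$.
\item Since $G'=\langle c\rangle$ and commutation is bilinear, $\langle x^u,y^u\rangle Q\le N_G(K)$ and $|K^G:K|\le u$. So (vii) holds, and therefore also $\ac_{an}$ and $\ac_{nn}$.
\item Yet $N_G(\langle x\rangle)=C_G(x)=\langle x\rangle Q$ has infinite index, and $|\langle x\rangle^G:\langle x\rangle|=|\langle x,c\rangle:\langle x\rangle|$ is infinite.
\end{itemize}

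Your ``two halves of $A$'' idea breaks exactly here. The non-minimax abelian subgroup may look like $\mathbb{Q}$, whose non-trivial subgroups pairwise intersect non-trivially. This is the branch of Theorem~\ref{minan} where only ``$G'$ polycyclic-by-finite'' is obtained. Note also that $HA$ need not even be a subgroup in general.

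\textbf{What survives, and what is missing.} (vii)$\Rightarrow$(i) needs no case distinction. Every non-$\chi$ subgroup is minimax, hence soluble-by-finite, and your minimax-length invariant strictly decreases along a descending infinite-index chain of such subgroups. This is the paper's ``obviously''.

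However, (ii) is then never reached. Along an ascending chain of minimax subgroups the invariant increases, which is no obstruction in a non-minimax group: think of $\mathbb{Z}<\mathbb{Z}[1/2]<\mathbb{Z}[1/6]<\cdots$ inside $\mathbb{Q}$. Your cycle reaches (ii) only through the false claim. The paper obtains (ii), and the whole equivalence of (i)--(v), from \cite[Proposition 2.1, Theorems 3.7 and 3.10]{Dardo}. Without that input you need a genuine structural argument for Max-$\infty$.

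\textbf{A smaller point.} In your locally finite step, $D\le FC(G)$ does not by itself give the hypothesis of Theorem~\ref{best}, namely that $Z(FC(G))$ has infinite total rank. It is cleaner to prove $G$ is an $FC$-group directly and then apply Proposition~\ref{fc}. For each finite $F\le G$, take an $F$-invariant $E=\prod_i E_i$ with $E\cap F=1$ via Zaicev's theorem. Then Lemma~\ref{int1}(iii) with $L=\langle x\rangle$, $x\in F$, shows $\langle x\rangle$ is $\chi$, as in the proof of Theorem~\ref{thnm}.
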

\begin{proof}
By \cite[Proposition 2.1, Theorem 3.7 and Theorem 3.10]{Dardo}, the first 5 conditions are equivalent. By Lemma \ref{rob}, they all imply (vi). Let us suppose (vi) holds. First, if $G$ is locally finite and non-\v Cernikov, it is finite-by-abelian by Proposition \ref{fc}. This and Proposition \ref{radbyfin} show that $G$ is in general radical-by-finite. By Theorem \ref{minan}, every non-minimax subgroup of $G$ is almost normal (resp. nearly normal) and in this case, $G$ obviously satisfies (i) and the proof is complete.
\end{proof}

Notice that the structure of groups satisfying the weak minimal condition on non-(nearly normal) subgroups and the weak minimal condition on non-(almost normal) subgroups has been described in detail in \cite{CuKu} and in \cite{DM}, respectively. So our description is complete.

\smallskip
We now obtain the corresponding result for normal subgroups.

\begin{theo}\label{fullnormal}
    Let $G$ be a generalized radical group. The following conditions are equivalent:
    \begin{itemize}
        \item[(i)] $G$ satisfies Min-$\infty$ on non-normal subgroups;
        \item[(ii)] $G$ satisfies Max-$\infty$ on non-normal subgroups;
        \item[(iii)] $G$ satisfies $\DCCinf$ on non-normal subgroups;
        \item[(iv)] $G$ has normal deviation;
        \item[(v)] $G$ satisfies RCC on non-normal subgroups;
        \item[(vi)] $G$ is minimax or Dedekind;
        \item[(vii)] $G$ satisfies $\ac_{n}$.
    \end{itemize}
\end{theo}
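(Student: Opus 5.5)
Our plan mirrors the proof of Theorem~\ref{thannn}. Conditions (i)--(v) are the normal-subgroup versions of the weak chain conditions, and their equivalence with (vi) for generalized radical groups is the content of Dardano and De Mari's work \cite{Dardo}, which we cite exactly as in Theorem~\ref{thannn}. Lemma~\ref{rob} gives (v)$\Rightarrow$(vii). It then suffices to show (vii)$\Rightarrow$(vi). (Alternatively, (vi)$\Rightarrow$(i) directly: in a minimax group a descending chain with infinite indices has bounded length, and in a Dedekind group there are no non-normal subgroups.)

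Assume $G$ satisfies $\ac_n$. Normal subgroups are almost normal and nearly normal, and every subfamily join in the definition of $\ac$ that fails to be almost normal also fails to be normal. Hence $\ac_{an}$ forbids fewer antichains than $\ac_n$, so $\ac_n$ implies $\ac_{an}$. By Theorem~\ref{thannn}, every non-minimax subgroup of $G$ is almost normal, and the first part of the proof of Theorem~\ref{thannn} shows that $G$ is radical-by-finite. Suppose $G$ is not minimax; we must prove that $G$ is Dedekind.

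The key step is to upgrade ``almost normal'' to ``normal'' for non-minimax subgroups, and then to all subgroups. Normality is finite-intersection-closed and normalizing join-closed, so Lemmas~\ref{int1}, \ref{inftn} and~\ref{good} apply with $\chi=n$. As in the proof of Theorem~\ref{minan}, take an abelian non-minimax subgroup $A$ of $G$ and a free subgroup $B$ with $A/B$ periodic. There are two cases.

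\begin{itemize}
\item If $B$ has infinite total rank, Lemma~\ref{good} gives that every subgroup of $B$ is normal in $G$.
\item Otherwise $A/B$ has an infinite socle, which is an infinite direct product of cyclic factors; Lemma~\ref{inftn}(i) then gives that $B$ and the preimages of the factors are normal.
\end{itemize}

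In either case we obtain a section $X/Y$ that is an infinite direct product of nontrivial subgroups, all normal in $G$.

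Now let $H$ be an arbitrary subgroup of $G$ and $h\in H$. We want to apply Lemma~\ref{int1}(iii) with $L=\langle h\rangle$. For this we need a further infinite direct product section $X_1/Y_1$ with $\langle h\rangle\cap X_1\le Y_1$ and with normal factors, so that permutability with $L$ is automatic. We obtain it by discarding finitely many factors to avoid $\langle h\rangle$ when $h$ has finite order, or by passing to a subproduct of factors of infinite index over $\langle h\rangle\cap X$ otherwise. Lemma~\ref{int1}(iii) then makes $\langle h\rangle Y_1$ normal. Intersecting two such subgroups obtained from disjoint infinite subfamilies, as in Lemma~\ref{int1}(iii), shows that $\langle h\rangle$ itself is normal, since the factors can be chosen so that the $Y_1$'s meet $\langle h\rangle$ trivially. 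Thus every cyclic subgroup is normal and $G$ is Dedekind.

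The main obstacle is this last step: producing, for each element $h$, an infinite direct decomposition into normal factors meeting $\langle h\rangle$ trivially. This is delicate when the only available product comes from a \v{C}ernikov-like socle or from torsion-free parts where $h$ may lie in the product. We would handle it by taking $X/Y$ of infinite total rank inside $Z(FC(G))$, using Theorem~\ref{best}, which already makes $G$ central-by-finite with many central factors. Since a subgroup of finite rank meets only finitely many independent factors of infinite total rank, we can always shrink the family to avoid $\langle h\rangle$.
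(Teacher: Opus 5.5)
Your reduction matches the paper's. You cite \cite{Dardo} for (i)--(vi), use Lemma~\ref{rob} for (v)$\Rightarrow$(vii), pass from $\ac_n$ to $\ac_{an}$ (the paper uses $\ac_{nn}$) to get radical-by-finite, and settle the case where $B$ has infinite rank by Lemma~\ref{good}.

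The gap is the case where $B$ is finitely generated and non-trivial. There the only infinite direct product you have is the section $S/B$, so every section $X_1/Y_1$ you extract from it has $Y_1\ge B$. Lemma~\ref{int1}(iii) then only gives that $\langle h\rangle Y_1$ is normal. Intersecting two such subgroups built from disjoint subfamilies of the same section gives $\langle h\rangle Y\ge\langle h\rangle B$, exactly as in the proof of Lemma~\ref{int1}(iii), and never $\langle h\rangle$. Your claim that the $Y_1$'s can be chosen to meet $\langle h\rangle$ trivially fails as soon as $h\in B$. Even when it holds it is not enough, since $\langle h\rangle Y_1\cap\langle h\rangle Y_1'$ can be much larger than $\langle h\rangle$: in $\mathbb{Z}^2$ take $h=(1,1)$, $Y_1=\mathbb{Z}\times 0$, $Y_1'=0\times\mathbb{Z}$.

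The fallback via Theorem~\ref{best} is not available, because nothing forces $Z(FC(G))$ to have infinite total rank here. For $G=\mathbb{Q}$, which satisfies $\ac_n$ and is not minimax, $Z(FC(G))$ has total rank $1$; the infinite rank sits in the quotient $A/B$, not in a subgroup. And when $Z(FC(G))$ does have infinite total rank, you already have a product with trivial bottom, and Lemma~\ref{good} finishes without Theorem~\ref{best}.

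The missing idea is to move the bottom of the section by passing to quotients.
\begin{enumerate}
\item By Lemma~\ref{inftn}(i), $B$ is normal in $G$, hence so is every $B^m$.
\item $\ac_n$ passes to $G/B^m$, since an offending antichain there lifts to one in $G$.
\item $A/B^m$ is periodic and not \v{C}ernikov, so its socle is an infinite direct product of cyclic groups. Lemma~\ref{good} applied in $G/B^m$ then shows that $G/B^m$ is Dedekind.
\end{enumerate}
The paper takes $m=2^k$ with $k>2$. Then $B/B^{2^k}$ contains elements of order $2^k\ge 8$, which is impossible in a Hamiltonian group. So each $G/B^{2^k}$ is abelian, and $G'\le\bigcap_k B^{2^k}=1$.

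If you want to keep your element-wise plan instead, the same step gives $\langle h\rangle B^m$ normal in $G$ for every $m$. You then still have to show $\bigcap_m\langle h\rangle B^m=\langle h\rangle$. This holds because $\langle h\rangle B^m\cap B=(\langle h\rangle\cap B)B^m$ and $B/(\langle h\rangle\cap B)$ is finitely generated abelian, so $\bigcap_m(\langle h\rangle\cap B)B^m=\langle h\rangle\cap B$. This route avoids the exponent argument, but it needs all $m$, not only powers of $2$.
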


\begin{proof}
By \cite[Proposition 2.1 and Theorem 3.11]{Dardo}, the first 6 conditions are equivalent, and they all imply (vii) by Lemma \ref{rob}. Let us suppose (vii) holds. Certainly, $\ac_{n}$ implies $\ac_{nn}$, so $G$ is radical-by-finite by the previous theorem. Suppose that $G$ is not minimax, so that we may find an abelian non-minimax subgroup $A$ of $G$. Let $B$ be a torsion-free subgroup of $A$ such that $A/B$ is periodic. If $B$ has infinite rank, by Lemma \ref{inftn}(i) and Lemma \ref{good} all cyclic subgroups of $G$ are normal, so $G$ is a Dedekind group, so we may assume that $B$ is finitely generated. However, this means that for every integer $k>2$, $A/B^{2^k}$ has an infinite socle. Again by Lemma \ref{inftn}(i) and Lemma \ref{good}, $G/B^{2^k}$ is a Dedekind group for every $k>2$ and hence $G$ is abelian.
\end{proof}

\section{Permutable and modular subgroups}

We next consider the antichain conditions restricted to permutable and modular subgroups. Recall that $\ac_{per}$ is the antichain condition $\ac_\chi$ when $\chi$ is the property of being permutable. Although these classes are not, in general, subgroup-closed, the relevant posets still interact well with subgroups in generalized radical groups, so that one can push the minimax alternative obtained earlier to these settings.

Let $G$ be a group and $K$ a subgroup of $G$. The subgroup $K$ is said to be {\it permutable} (or {\it quasinormal}) in $G$ (denoted $K \operatorname{per} G$) if for every subgroup $H$ of $G$, $HK = KH$. Recall that a group is called {\it quasi-Hamiltonian} if all its subgroups are permutable.

\begin{prop}\label{minmaxperm}
    Let $G$ be a group satisfying $\ac_{per}$. If $G$ is radical-by-finite, then every non-minimax subgroup of $G$ is permutable in $G$.
\end{prop}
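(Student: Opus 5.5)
We follow the scheme of Theorem~\ref{minan}. Note first that permutability is normalizing join-closed: if $H,K$ are permutable and $H\le N_G(K)$, then $HK$ is a subgroup and $HKX=HXK=XHK$ for every $X\le G$. Hence Lemma~\ref{int1}(i) and (ii) apply. Permutability is \emph{not} finite-intersection-closed in general, so Lemma~\ref{int1}(iii), Lemma~\ref{inftn}(i) and Lemma~\ref{good} cannot be used verbatim. We therefore lean on the normalizing part and on known facts about permutable subgroups: they are ascendant (Stonehewer), and permutable subgroups of infinite index, or their cores, behave well modulo the core.

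\textbf{Step 1.} Let $H$ be a non-minimax subgroup of the radical-by-finite group $G$. By \cite[Theorem 10.35]{Rob72}, $H$ contains an abelian non-minimax subgroup $A$. The aim is to show that $H$ has a section $X/Y$ that is an infinite direct product of non-trivial subgroups $X_\lambda/Y$, with $L=H$ permuting with each $X_\lambda$ and $H\cap X\le Y$. Then Lemma~\ref{int1}(i) gives that $HX$ is permutable.

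We would organize this as in Theorem~\ref{minan}. Take a free abelian subgroup $B\le A$ with $A/B$ periodic.
\begin{itemize}
\item[(a)] If $B$ has infinite rank, we first show that cyclic subgroups of $B$ are permutable, using Lemma~\ref{int1}(ii) with $L=\langle b\rangle$. Splitting into two infinite halves gives permutable subgroups $\langle b\rangle\widehat{H}$ and $\langle b\rangle\widehat{K}$. We then try to recover permutability of $\langle b\rangle$ itself via their cores, since the join of permutable subgroups is permutable.
\item[(b)] If $B$ is finitely generated, then $A/B$, or $A/B^{n}$, has infinite socle, and we work in the corresponding quotients.
\end{itemize}
This should first force $G$ to be ``nearly'' quasi-Hamiltonian modulo a small normal subgroup. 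Concretely, the analogue of Theorem~\ref{best} would say: if $G$ contains a direct product of infinitely many cyclic subgroups that permute with everything, then every cyclic subgroup is permutable. That would yield $G'$ polycyclic-by-finite (or locally finite with small structure) in the finitely generated-$B$ case.

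\textbf{Step 2.} Once $G'$, or $G'$ modulo a finite normal subgroup, is known to be small (polycyclic-by-finite or Černikov-by-polycyclic), any non-minimax $H$ satisfies that $H/H'$, or $H/(H\cap G')$, has a homomorphic image of infinite total rank. This gives the required infinite direct product section $X/Y$ with $X\le H$, so $H\cap X$ is not inside $Y$. To fix this, we take the section inside $H$ and use Lemma~\ref{int1}(i) with $L=1$, together with the normalizing join-closure, to get $X\le H$ permutable. Since we can choose $X$ to be a subgroup with $H/X$ small, we still need to conclude that $H$ is permutable. We plan to do this by splitting $H/Y$ into two infinite halves $X_1,X_2$, each permutable by Lemma~\ref{int1}(i), with $H=X_1X_2$ after arranging $Y\le X_1\cap X_2$ and absorbing a complement. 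Then $H$ is permutable as a join of permutable subgroups.

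\textbf{Main obstacle.} The hardest point is Step 1(a)/(b): turning ``permutable-by-infinite-product'' information into permutability of the small pieces, namely cyclic subgroups or $B$, without intersection-closure. The first tool to try is that for a permutable subgroup $P$, the quotient $P^G/P_G$ is nilpotent and $P/P_G$ is hypercentrally embedded. The aim is to show that the cores of the two halves $\langle b\rangle\widehat{H}$ and $\langle b\rangle\widehat{K}$ intersect in something normal, and then obtain permutability of $\langle b\rangle$ modulo that core.
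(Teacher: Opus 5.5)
Your proposal has a genuine gap. Step 1, which you flag yourself as the main obstacle, is never carried out. You do not show that cyclic subgroups of $B$ (or of $G$) are permutable, nor that $G'$ is small. The tools you mention (cores of $\langle b\rangle\widehat{H}$ and $\langle b\rangle\widehat{K}$, hypercentral embedding) are not turned into an argument, and without intersection-closure there is no evident way to pass from ``$\langle b\rangle\widehat{H}$ and $\langle b\rangle\widehat{K}$ permutable'' to ``$\langle b\rangle$ permutable''.

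Step 2 rests on Step 1, and its final move (``absorbing a complement'' to write $H=X_1X_2$) is also unjustified. Lemma~\ref{int1}(i) with $L=1$ only makes $X$ permutable when $X/Y$ is itself an infinite direct product of non-trivial factors, and a general non-minimax $H$ need not have such a form. Your first idea, taking $L=H$, cannot work either: a section inside $H$ violates $H\cap X\le Y$.

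The missing idea is that no global information on $G$ is needed. Apply Lemma~\ref{int1}(ii) element by element, with $L=\langle x\rangle$ for $x\in H$, to a section lying inside $H$:
\begin{itemize}
\item Take $A\le H$ abelian non-minimax and $B\le A$ with $A/B$ an infinite direct product of non-trivial subgroups. Regroup the factors into infinitely many infinite blocks $A_i/B$.
\item By Lemma~\ref{int1}(i) with $L=1$, each $A_i$ is permutable in $G$. Hence every cyclic subgroup of $G$ permutes with every $A_i$.
\item For $x\in H\setminus B$, the cyclic group $\langle x\rangle\cap A$ has image in $A/B$ supported on finitely many blocks. Discarding those blocks gives $A_x\le A$ with $\langle x\rangle\cap A_x\le B$.
\item Lemma~\ref{int1}(ii) then yields $B\le\widehat{A}_x\le A_x$ with $\langle x\rangle\widehat{A}_x$ permutable in $G$.
\end{itemize}
All these subgroups lie in $H$ and contain $B$, so $H=\langle\,\langle x\rangle\widehat{A}_x\mid x\in H\setminus B\,\rangle$ is a join of permutable subgroups, hence permutable. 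This is the paper's argument. The ``extra'' factor $\widehat{A}_x$ that you tried to remove via intersections is harmless because it already sits inside $H$. The radical-by-finite hypothesis is used only to produce $A$.
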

\begin{proof}
Let $H$ be a non-minimax subgroup of $G$. Since $H$ is radical-by-finite, $H$ contains an abelian non-minimax subgroup $A$ by \cite[Theorem 10.35]{Rob72}. Then $A$ has a homomorphic image $A/B$ that is the direct product of an infinite collection of non-trivial subgroups. Write now $A/B=\Dir_{i\in I} A_i/B$, where each $A_i/B$ is infinite. Then by Lemma \ref{inftn}, every $A_i$ is permutable in $G$. Let $x$ be an element of $H \setminus B$. We can select an infinite sub-collection of $(A_i/B)_{i\in I}$ generating a subgroup $A_x/B$ such that $\langle x \rangle \cap A_x \le B$. By Lemma \ref{int1}(ii), there exists a subgroup $\widehat{A}_x$ of $A_x$ containing $B$ which is permutable in $G$, such that $\langle x \rangle \widehat{A}_{x}$ is permutable in $G$. Since $H$ is generated by the join of such subgroups, $H$ is permutable in $G$.
\end{proof}

The following characterization sharpens the previous equivalence by isolating the exact role played by the Hirsch--Plotkin radical and by minimax sections. It will be the key input when we turn to permutable and modular subgroups.

\begin{theo}\label{radhami}
    Let $G$ be a generalized radical group. The following conditions are equivalent:
    \begin{itemize}
        \item[(i)] $G$ satisfies Min-$\infty$ on non-permutable subgroups;
        \item[(ii)] $G$ satisfies Max-$\infty$ on non-permutable subgroups;
        \item[(iii)] $G$ satisfies $\DCCinf$ on non-permutable subgroups;
        \item[(iv)] $G$ has permutable deviation;
        \item[(v)] $G$ satisfies RCC on non-permutable subgroups;
        \item[(vi)] $G$ is minimax or quasi-Hamiltonian;
        \item[(vii)] Every non-minimax subgroup of $G$ is permutable in $G$;
        \item[(viii)] $G$ satisfies $\ac_{per}$.
    \end{itemize}
\end{theo}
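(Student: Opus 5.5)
The proof follows the pattern of Theorems \ref{thannn} and \ref{fullnormal}. First, the equivalence of (i)--(vi) is available in the literature on weak chain conditions: I would cite the relevant results of \cite{Dardo} (Proposition 2.1 together with the theorem treating permutable subgroups). Lemma \ref{rob} then shows that (v) implies (viii). Next, (vi) implies (vii) trivially: in a quasi-Hamiltonian group every subgroup is permutable, and in a minimax group there are no non-minimax subgroups. So the real content is to show that (viii) implies (vi) (or (vii)), and that (vii) implies one of the earlier conditions.

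For (vii) $\Rightarrow$ (i), I would argue directly. Suppose $(H_n)$ is a descending chain of non-permutable subgroups with infinite indices. By (vii) every $H_n$ is minimax. A descending chain of subgroups of a minimax (radical-by-finite) group in which each index is infinite must be finite, since minimax length drops at each step. Hence Min-$\infty$ holds, and the cited equivalences close the cycle.

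For (viii) $\Rightarrow$ (vii), the plan is to reduce to the radical-by-finite case and then apply Proposition \ref{minmaxperm}.
\begin{itemize}
\item To reduce, use Proposition \ref{radbyfin} with $\mathfrak X$ the class of groups satisfying $\ac_{per}$, or argue directly as in Theorem \ref{thannn}. This requires showing that a locally finite group $G$ with $\ac_{per}$ is soluble-by-finite (or \v{C}ernikov).
\item If $G$ is locally finite and not \v{C}ernikov, it contains an infinite abelian subgroup with infinite socle, which is a direct product of infinitely many cyclic subgroups.
\item Permutable subgroups are normalizing join-closed. They are also intersection-closed up to the relevant cases, at least for subgroups of such direct products where modular-lattice arguments apply. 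With this, Lemma \ref{good} and Lemma \ref{int1} show that many subgroups of $G$ are permutable.
\item Running the argument of Proposition \ref{minmaxperm} for locally finite $G$ then shows that every non-\v{C}ernikov subgroup is permutable.
\item I would then invoke the known structure of locally finite groups in which all non-\v{C}ernikov subgroups are permutable (or simply all subgroups are permutable). Such groups are locally nilpotent, hence soluble-by-finite.
\end{itemize}
Hypothesis $\ac_{per}$ passes to subgroups and quotients in the needed sense, since joins of permutable subgroups containing the kernel correspond. Proposition \ref{radbyfin} therefore makes $G$ radical-by-finite, and Proposition \ref{minmaxperm} yields (vii).

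The main obstacle is the locally finite step. Permutability is not finite-intersection-closed in general, so Lemma \ref{inftn}(i) and Lemma \ref{int1}(iii) cannot be applied verbatim. I would first try to work only with subgroups of the infinite abelian direct product and with joins of the form $L\widehat X$ from Lemma \ref{int1}(ii), which needs no intersections. From these I would deduce that every cyclic subgroup of $G$ is permutable, exactly as in the proof of Proposition \ref{minmaxperm} with $H=G$, where $G$ non-\v{C}ernikov plays the role of the non-minimax subgroup. This makes $G$ quasi-Hamiltonian, hence locally nilpotent, and completes the reduction.
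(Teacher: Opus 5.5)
Your outer structure follows the paper: cite \cite{Dardo} for (i)--(vi), use Lemma~\ref{rob} for (v)$\Rightarrow$(viii), note (vi)$\Rightarrow$(vii) and (vii)$\Rightarrow$(i) (your minimax-length argument is fine), observe that $\ac_{per}$ passes to subgroups and quotients, and finish with Proposition~\ref{radbyfin} and Proposition~\ref{minmaxperm}. The gap is in the one step with real content: showing that a locally finite non-\v{C}ernikov group with $\ac_{per}$ is soluble-by-finite.

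Your final plan is to get every cyclic subgroup permutable ``exactly as in the proof of Proposition~\ref{minmaxperm} with $H=G$''. This cannot work. That argument exhibits $H$ as the join of permutable subgroups $\langle x\rangle\widehat A_x$, all contained in $H$, so for $H=G$ it only says that $G$ is permutable in itself. Isolating $\langle x\rangle$ from $\langle x\rangle\widehat A_x$ would require intersecting two such subgroups. That is exactly the intersection-closure you rightly said is unavailable, and the appeal to ``modular-lattice arguments'' (and with it Lemma~\ref{good}) is unsupported. Also, ``locally nilpotent, hence soluble-by-finite'' is false in general. What you need is Iwasawa's theorem that quasi-Hamiltonian groups are metabelian \cite[Theorem 2.4.11]{Schmidt}.

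Two ideas close the gap along the paper's lines.

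\emph{No intersections are needed for the building blocks.} Write the infinite direct product of cyclic groups as $S=\Dir_{i\in\Lambda}S_i$, with each $S_i$ itself an infinite direct product. Lemma~\ref{int1}(i) with $L=1$ uses only that products of permutable subgroups are permutable. It therefore makes every $S_i$, hence every $\langle S_\gamma\rangle_{\gamma\in\Gamma}$, permutable in $G$. (The paper cites Lemma~\ref{inftn} at this point; Lemma~\ref{int1}(i) gives it with no intersection argument.) In particular each $\langle x\rangle$ permutes with every $S_i$, as Lemma~\ref{int1}(ii) requires.

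\emph{Work with products of sets instead of intersections of subgroups.} Given $x,y\in G$, the subgroup $\langle x,y\rangle$ is finite, so you may discard finitely many $S_i$ and assume $\langle x,y\rangle\cap S=1$. Lemma~\ref{int1}(ii) with $L=\langle x\rangle$ gives $\widehat S=\langle S_\gamma\rangle_{\gamma\in\Gamma}$ with $\langle x\rangle\widehat S$ permutable. Hence $\langle x\rangle\langle y\rangle\subseteq(\langle x\rangle\widehat S)\langle y\rangle=\langle y\rangle\langle x\rangle\widehat S$. Since $\langle x\rangle\langle y\rangle\subseteq\langle x,y\rangle$ and $\langle x,y\rangle\cap S=1$, the $\widehat S$-component of any such element is trivial, so $\langle x\rangle\langle y\rangle\subseteq\langle y\rangle\langle x\rangle$. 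Equality follows by taking inverses. Thus $G$ is quasi-Hamiltonian, hence metabelian.

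Alternatively, once the first idea is in place, your bullet-point argument (with \v{S}unkov's theorem supplying abelian non-\v{C}ernikov subgroups) does show that every non-\v{C}ernikov subgroup of a locally finite $\ac_{per}$-group is permutable, which is (vii). You could then conclude via the cited chain (vii)$\Rightarrow$(i)$\Rightarrow$(vi), rather than an unspecified structure theorem.
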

\begin{proof}
By \cite[Proposition 2.1 and Theorem 4.6]{Dardo}, the first 6 conditions are equivalent and clearly (i) and (vii) are equivalent. By Lemma \ref{rob}, they all imply (viii). Assume (viii) holds and suppose first that $G$ is locally finite and not \v Cernikov, so that we may find a subgroup $S$ which is the direct product of infinitely many non-trivial cyclic subgroups. Write $S=\Dir_{i\in \Lambda} S_i$, where each $S_i$ is infinite. Then by Lemma \ref{inftn}, every $S_i$ is permutable in $G$. Let now $x$ and $y$ be two elements of $G$ and assume without loss of generality that $\gen{x,y}\cap S=1$. Lemma \ref{int1}(ii) makes it possible to find a subgroup $\widehat{S}=\langle S_\gamma\rangle_{\gamma \in\Gamma}$ of $S$ for some $\Gamma\subset\Lambda$ such that $\gen{x}\widehat{S}$ is permutable in $G$. Clearly, since every $S_i$ is permutable in $G$, also $\widehat{S}$ is such. Now it easily follows that $\gen{x}\gen{y}\subseteq (\widehat{S}\gen{x})\gen{y}=\gen{y}\gen{x}\widehat{S}$. Since also the converse holds, similarly, we have that $\gen{x}\gen{y}=\gen{y}\gen{x}$ and then $G$ is quasi-Hamiltonian. It follows that $G$ is in any case soluble-by-finite (see \cite[Theorem 2.4.11]{Schmidt}) whenever $G$ is locally finite, and this shows, together with Proposition \ref{radbyfin}, that $G$ is in general radical-by-finite. Then we are in a position to apply Proposition \ref{minmaxperm} to obtain the result.
\end{proof}

\smallskip

We now move to modular subgroups. Let $G$ be a group and $K$ a subgroup of $G$. The subgroup $K$ is said to be {\it modular} in $G$ (denoted $K \operatorname{mod} G$) if for all subgroups $H, L$ of $G$:
\begin{itemize}
    \item If $H \le L$, then $L \cap \langle H, K \rangle = \langle H, K \cap L \rangle$;
    \item If $K \le H$, then $L \cap \langle H, K \rangle = \langle K, L \cap H \rangle$.
\end{itemize}

Recall that the property of being modular is join-closed (see for instance \cite[Theorem 2.1.6(d)]{Schmidt}). In the following, we will use Theorem 2.4.11 and Theorem 2.4.14 in \cite{Schmidt} to refer to the classification of quasi-Hamiltonian groups made by Iwasawa. Recall also that a subgroup is permutable if and only if it is modular and ascendant.

\begin{theo}\label{thnm}
    Let $G$ be a generalized radical group. The following conditions are equivalent:
    \begin{itemize}
        \item[(i)] $G$ satisfies Min-$\infty$ on non-modular subgroups;
        \item[(ii)] $G$ satisfies Max-$\infty$ on non-modular subgroups;
        \item[(iii)] $G$ satisfies $\DCCinf$ on non-modular subgroups;
        \item[(iv)] $G$ has modular deviation;
        \item[(v)] $G$ satisfies RCC on non-modular subgroups;
        \item[(vi)] Every non-minimax subgroup of $G$ is modular in $G$;
        \item[(vii)] $G$ is minimax or has a modular subgroup lattice;
        \item[(viii)] $G$ satisfies $\ac_{mod}$.
    \end{itemize}
\end{theo}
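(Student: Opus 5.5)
We follow the same scheme as Theorems \ref{thannn}, \ref{fullnormal} and \ref{radhami}. First, by \cite[Proposition 2.1]{Dardo} together with the corresponding result on modular subgroups in \cite{Dardo} (the analogue of Theorem 4.6 there), conditions (i)--(v) and (vii) are equivalent. Condition (vi) is clearly equivalent to (i): minimax groups satisfy the weak minimal condition, and a descending chain of non-modular subgroups with infinite indices cannot continue indefinitely inside minimax subgroups. By Lemma \ref{rob}, all of these imply (viii). So the real work is to show that (viii) implies (vi).

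\textbf{Reduction to the radical-by-finite case.} Assume $G$ satisfies $\ac_{mod}$. Modularity is join-closed, and hence normalizing join-closed; we will mostly use Lemma \ref{int1}(i),(ii), since modularity is not obviously finite-intersection-closed.

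Suppose first that $G$ is locally finite and not \v Cernikov. Then $G$ contains a direct product $S=\Dir_{i\in\Lambda}S_i$ of infinitely many non-trivial finite subgroups, with each $S_i$ infinite after regrouping.

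\emph{Step 1: each $S_i$ is modular.} Apply Lemma \ref{int1}(ii) with $L=1$ to two disjoint infinite complementary parts of $\Lambda\setminus\{i\}$ together with $S_i$. Since $S_i$ is normal in $S$ and permutes with everything relevant, this yields modular subgroups of $S$ whose joins with $S_i$ are modular. To extract modularity of $S_i$ itself without intersection-closure, we would instead use Lemma \ref{int1}(i) with $L=S_i$ and $X=\langle S_j\rangle_{j\neq i}$, and argue via the lattice identities in the definition of modularity.

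\emph{Step 2: every cyclic subgroup is modular.} For $x\in G$, choose an infinite subproduct $\widehat S$ with $\langle x\rangle\cap \widehat S=1$ that permutes with $\langle x\rangle$. The hard part is that $\langle x\rangle$ need not permute with $S_i$. To overcome this, we will work inside the locally finite group $\langle x,S\rangle$: pass to $x$-invariant subproducts, using Dietzmann's lemma to make the $S_i$ normalized by $x$. Lemma \ref{int1}(ii) then gives $\langle x\rangle\widehat S$ modular. Finally, using the modular identity with $L=\langle x\rangle\langle S_j\rangle_{j\in\Gamma'}$ for a complementary infinite set $\Gamma'$, we recover that $\langle x\rangle$ is modular.

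Once every cyclic subgroup is modular, $G$ has modular subgroup lattice by join-closure. By Iwasawa's classification (\cite[Theorems 2.4.11, 2.4.14]{Schmidt}), $G$ is then soluble-by-finite. Proposition \ref{radbyfin} then shows that in general $G$ is radical-by-finite.

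\textbf{Radical-by-finite case.} Let $H$ be non-minimax. Exactly as in Proposition \ref{minmaxperm}, $H$ contains an abelian non-minimax $A$ with a quotient $A/B=\Dir A_i/B$ having infinitely many infinite factors. For each $x\in H\setminus B$, pick an infinite subproduct $A_x$ with $\langle x\rangle\cap A_x\le B$. Since $A$ is abelian, $A_x$ is normalized by elements of $A$, and Lemma \ref{int1}(ii) gives a modular $\langle x\rangle \widehat A_x$. Then $H$ is the join of these, hence modular, proving (vi).

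\textbf{Main obstacle.} The difficulty is the permutability hypothesis $LX_\lambda=X_\lambda L$ required in Lemma \ref{int1} when $x\notin A$. To handle it, we plan to first show that the relevant abelian subgroups lie in a modular, hence (by the permutable$=$modular$+$ascendant relation) well-behaved, part of $G$, possibly restricting to $N_G(A_i)$ of finite index as in Theorem \ref{minan}. Failing that, we would reduce to Theorem \ref{radhami} by showing that the Hirsch–Plotkin radical satisfies $\ac_{per}$.
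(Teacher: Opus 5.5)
Your overall scheme matches the paper's: show that a locally finite non-\v{C}ernikov group with $\ac_{mod}$ has a modular subgroup lattice, then use Iwasawa and Proposition~\ref{radbyfin}. However, the locally finite step as you describe it does not go through, for two reasons.

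\textbf{The permutability hypothesis is not met.} An arbitrary $x\in G$ need not normalize $S$ or permute with the $S_i$, and Dietzmann's lemma does not repair this. Local finiteness (or Dietzmann) only makes $\langle S_i^{\langle x\rangle}\rangle$ finite. These subgroups need not lie in $S$, need not be abelian, and need not be independent. So you never obtain an infinite direct product of $x$-invariant factors to feed into Lemma~\ref{int1}(ii).

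The missing input is Zai\v{c}ev's theorem. For every finite subgroup $X$ of a locally finite non-\v{C}ernikov group, there is an $X$-invariant abelian subgroup $E$ which is a direct product of infinitely many cyclic groups. Since $X$ is finite, one may take $E\cap X=1$ and split $E$ into infinitely many $X$-invariant factors.

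\textbf{You cannot recover modularity of $\langle x\rangle$ in $G$.} Even granting that $\langle x\rangle\widehat S$ is modular in $G$, modularity is not intersection-closed. Your Step 1 uses Lemma~\ref{int1}(i) with $L=S_i$, but that only shows $S$ is modular, not $S_i$. The unspecified ``modular identity'' argument in Step 2 likewise does not yield modularity of $\langle x\rangle$ in $G$.

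The paper never proves this. Instead it uses two facts:
\begin{itemize}
\item[(a)] if $M$ is modular in $G$, then $M\cap X$ is modular in $X$;
\item[(b)] for $x\in X$, Dedekind's law gives $\widehat E\langle x\rangle\cap X=\langle x\rangle(\widehat E\cap X)=\langle x\rangle$.
\end{itemize}
Together these show that every cyclic subgroup of every finite $X$ is modular in $X$. Hence $\mathcal L(X)$ is modular for all finite $X$. Lattice modularity is a local property, so it passes to $G$, and Iwasawa then gives metabelian.

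\textbf{The radical-by-finite step is also incomplete.} Your use of Lemma~\ref{int1}(ii) is legitimate only for $x\in A$. For $x\in H\setminus A$, $\langle x\rangle$ need not permute with the factors $A_i$, and $H$ is not generated by $A$ in general. You acknowledge this yourself. The ``Main obstacle'' paragraph lists possible strategies rather than giving an argument, so (viii)$\Rightarrow$(vi) is not established. (The paper's own treatment of this step is brief: it invokes $\langle h\rangle$-invariant pieces $\widehat A_h$, as in the locally finite case.)

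Your fallback idea is sound as far as it goes. In the Hirsch--Plotkin radical $R$ all subgroups are ascendant, so modular and permutable subgroups of $R$ coincide. Thus $R$ satisfies $\ac_{per}$ and Theorem~\ref{radhami} applies to $R$. But this only controls $R$, not the non-minimax subgroups of $G$ lying outside it.
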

\begin{proof}
By \cite{Dardo} and \cite{DeMar}, the first 7 conditions are equivalent. By Lemma \ref{rob}, they all imply (viii). Suppose (viii) holds and that $G$ is not minimax. Assume first that $G$ is locally finite, let $X$ be a finite subgroup of $G$ and let $x$ be an element of $X$. A result of Zai\v cev (see \cite{Zai}) ensures that we may find an abelian $X$-invariant subgroup $E$ of $G$ which is the direct product of countably many cyclic groups. Clearly, we assume without loss of generality that $E\cap X=1$ Since $X$ is finite, we may write $E=\Dir_{i\in\mathbb{N}} E_i$, where each $E_i$ is $X$-invariant. Then we may apply Lemma \ref{int1}(ii) to find an $X$-invariant subgroup $\widehat{E}$ of $E$ which is modular in $G$ and such that $\widehat{E}\langle x\rangle$ is modular in $G$. Then $\widehat{E}\langle x\rangle\cap X=\langle\widehat{E}\cap X,x\rangle=\langle x\rangle$. This shows that every cyclic subgroup of $X$ is modular in $X$ and hence $\mathcal{L}(X)$ is modular. By the generality of $X$, it follows that $\mathcal{L}(G)$ is modular, too, and this means that $G$ is metabelian by the already quoted result by Iwasawa. An application of Proposition \ref{radbyfin} yields that in general $G$ is radical-by-finite.

Let now $H$ be a non-minimax subgroup of $G$, and let $A$ be a non-minimax subgroup of $H$. Then we may find a subgroup $B$ of $A$ such that $A/B$ is the direct product of infinitely many non-trivial subgroups. Let now $h$ be an element of $H\setminus B$. Clearly, $B$ can be taken in a way that $A/B$ has still infinite total rank and that $\langle h\rangle\cap A\leq B$. Now, like before, we may apply Lemma \ref{int1}(ii) to find an $\langle h\rangle$-invariant subgroup $\widehat{A}_h$ of $A$ which is modular in $G$ and such that $\widehat{A}_h\langle h\rangle$ is modular in $G$. From this we have that $H=\langle\langle \widehat{A}_h\langle h\rangle\rangle\mid h\in H\setminus B\rangle$ is modular in $G$ and we have that $\mathcal{L}(G)$ is modular by \cite{DM}.
\end{proof}

\section{Pronormal subgroups}

Finally we deal with pronormal subgroups. Here the poset is typically more rigid, and the antichain condition forces strong restrictions on the Hirsch--Plotkin radical. One main obstacle will be in fact dealing with groups with trivial Hirsch-Plotkin radical, but in that case it will be shown that one can always produce forbidden antichains, provided that the group is locally finite and not soluble-by-finite.

Let $G$ be a group and $H$ a subgroup of $G$. The subgroup $H$ is said to be {\it pronormal} in $G$ if, for every element $g \in G$, the subgroups $H$ and $H^g$ are conjugate in their join $\langle H, H^g \rangle$. Recall here that a group $G$ is said to be a $T$-group if it has no subnormal non-normal subgroups; moreover, $G$ is called a $\overline{T}$-group if every subgroup of $G$ is a $T$-group. Recall also that a locally (soluble-by-finite) group $G$ is a $\overline{T}$-group if and only if every cyclic subgroup of $G$ is pronormal (see for instance \cite[Corollary 2.1]{BdG1}).

Here $\ac_{pr}$ denotes the antichain condition $\ac_\chi$ when $\chi$ is the property of being pronormal. We first show that $\ac_{pr}$ rules out infinite total rank in a natural family of sections of the Hirsch--Plotkin radical. This is a crucial step towards the final minimax alternative for radical-by-finite groups under $\ac_{pr}$.

\begin{theo}\label{inftytotalrank}
    Let $G$ be a group satisfying $\ac_{pr}$. If the Hirsch-Plotkin radical $R$ of $G$ has infinite total rank, then $G$ is a $\overline{T}$-group.
\end{theo}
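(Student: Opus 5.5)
Our aim is to show that every cyclic subgroup of $G$ is pronormal. The problem is that pronormality is neither normalizing join-closed nor finite-intersection-closed, so Lemma \ref{good} and Lemma \ref{int1}(i),(iii) do not apply directly. Only Lemma \ref{int1}(ii), which needs no closure hypothesis, is available. The plan is to produce, inside $R$, a section that is an infinite direct product of nontrivial factors normal in $G$, and then combine Lemma \ref{int1}(ii) with the fact that a pronormal subgroup containing a normal subgroup $N$ gives a pronormal image in $G/N$.

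\textbf{Step 1: a large abelian section of $R$.} Since $R$ is locally nilpotent of infinite total rank, it has an abelian section of infinite total rank. One standard route is to pass to a maximal abelian normal subgroup of some suitable locally nilpotent section, or to use the upper central factors. Then, exactly as in the proof of Theorem \ref{best}, we try to make this section a direct product $X/Y=\Dir_{i\in\mathbb N}X_i/Y$ of infinitely many nontrivial factors with each $X_i$ normal in $G$. If the factors can only be made $\langle g\rangle$-invariant rather than $G$-invariant, we work with $\langle g\rangle$-invariant decompositions and treat each element separately. This can be done with a Zaičev-type argument as in Theorem \ref{thnm}, since $\langle g\rangle$ acts on an abelian group of infinite total rank, and one can refine to a countable direct decomposition by $\langle g\rangle$-invariant pieces.

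\textbf{Step 2: fixing an element.} Fix $g\in G$. Discarding finitely many factors, we arrange that $\langle g\rangle\cap X\le Y$. Lemma \ref{int1}(ii) with $L=\langle g\rangle$ then gives a subgroup $\widehat X$, normal in $X$, such that $\langle g\rangle\widehat X$ is pronormal in $G$. We need $\widehat X$ to be normal in $G$, or at least $g$-invariant with a controlled structure. To get this we apply Lemma \ref{int1}(ii) to the family $(\langle g\rangle W_\lambda)$, where the $W_\lambda$ are joins of infinitely many $X_i$, as in its proof. The output is then $\langle g\rangle$ times a join $W$ of $G$-invariant pieces, so $W\trianglelefteq G$. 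It follows that $\langle g\rangle W/W$ is pronormal in $G/W$.

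\textbf{Step 3: intersecting.} Splitting the index set into infinitely many disjoint infinite subsets gives normal subgroups $W_1,W_2$ of $G$ with $W_1\cap W_2=Y$ and $\langle g\rangle W_j$ pronormal. Since $\langle g\rangle W_j$ commutes with everything relevant, we deduce that $\langle g\rangle Y$ is pronormal. Iterating over a descending sequence of such $Y$ (for instance $Y\cap R^{n}$-type layers, or using residual arguments in the locally nilpotent setting), we conclude that $\langle g\rangle$ is pronormal modulo arbitrarily small normal subgroups. Finally we pass to $\langle g\rangle$ itself using the criterion that, in locally (soluble-by-finite) groups, pronormality of cyclic subgroups characterizes $\overline T$-groups (\cite[Corollary 2.1]{BdG1}).

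\textbf{Main obstacle.} The hard step is this last passage from ``$\langle g\rangle N$ pronormal for many normal $N$'' to ``$\langle g\rangle$ pronormal''. Pronormality is not intersection-closed in general. What I would try first is to use that $\langle g\rangle N_1$ and $\langle g\rangle N_2$ are pronormal with $N_1\cap N_2=1$ (taking $Y=1$ when $R$ itself contains an infinite direct product of $G$-invariant subgroups). I would also use the fact that, for $N_i$ normal abelian, pronormality of $\langle g\rangle N_i$ means that $\langle g\rangle^x$ and $\langle g\rangle$ are conjugate modulo $N_i$ inside $\langle g, g^x\rangle N_i$. One then has to intersect the two conjugating cosets directly. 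A second difficulty is ensuring $G$-invariant factors at all, because $G$ need not be generalized radical here. If that fails, I would first show that $G$ is locally (soluble-by-finite) modulo a $\overline T$ quotient.
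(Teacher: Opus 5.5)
The gap is exactly the step you call the main obstacle, and it is not a technicality. In Step~3 nothing lets you pass from the pronormality of $\langle g\rangle W_1$ and $\langle g\rangle W_2$ to that of $\langle g\rangle W_1\cap\langle g\rangle W_2=\langle g\rangle Y$. Pronormality is not intersection-closed, and ``commutes with everything relevant'' does not change this. Likewise, being ``pronormal modulo arbitrarily small normal subgroups'' does not make $\langle g\rangle$ pronormal. So you never obtain a pronormal cyclic subgroup, and \cite[Corollary 2.1]{BdG1} has nothing to act on.

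Step~1 is also unsupported. Zai\v cev's theorem needs local finiteness, and a locally nilpotent group may have trivial centre. Moreover, an infinite cyclic group acting on an abelian group of infinite rank need not preserve any nontrivial direct decomposition: the shift on the base group of $C_p\wr\mathbb{Z}$ is an example. The invariant factors you need must therefore be extracted from $\ac_{pr}$ itself.

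The paper closes both holes as follows.
\begin{itemize}
\item A subgroup of $R$ that is pronormal in $G$ is pronormal in $R$, hence normal in $R$, because $R$ is locally nilpotent. Thus $R$ satisfies $\ac_n$, and since it has infinite total rank it is Dedekind by Theorem~\ref{fullnormal}.
\item Consequently every subgroup of $R$ is subnormal in $G$, and $R$ contains an infinite direct product $D$ of cyclic groups. Lemma~\ref{inftn}(ii) (ascendant pronormal subgroups are normal) then makes every subgroup of $R$ normal in $G$. This supplies the $G$-invariant factors for free.
\item Rather than proving $\langle g\rangle$ pronormal, the paper checks the $\overline{T}$-property directly: take $H$ subnormal in $K$.
\item If $H\cap D$ is finitely generated, one may assume $H\cap D=1$. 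Split $D=D_1\times D_2$ with both factors of infinite total rank. Lemma~\ref{int1}(ii) with $L=H$ gives $\widehat D_i\le D_i$, normal in $G$, with $\widehat D_iH$ pronormal in $G$.
\item Then $\widehat D_iH$ is pronormal and subnormal in $\widehat D_iK$, hence normal there. Normality \emph{is} intersection-closed, so $K$ normalizes $\widehat D_1H\cap\widehat D_2H$. This intersection equals $H$ by Dedekind's law, since $D_1\cap D_2=1=H\cap D$.
\end{itemize}
This conversion ``pronormal $+$ subnormal $\Rightarrow$ normal'' is the idea your argument is missing.

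If $H\cap D$ is infinitely generated, the auxiliary factors are chosen inside $H\cap D$. For each $x$, Lemma~\ref{int1}(ii) gives $D_x\le H\cap D$ with $D_x\langle x\rangle$ pronormal. Hence every cyclic subgroup of $G/(H\cap D)$ is pronormal, with no intersection needed, because $D_x$ dies in the quotient. The paper then concludes with the same criterion \cite[Corollary 2.1]{BdG1} you intended to use. Your concern about that criterion's local (soluble-by-finite) hypothesis applies to this step of the paper as well.
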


\begin{proof}
Since $R$ is locally nilpotent, it satisfies $\ac_n$. Thus, by Theorem \ref{fullnormal}, $R$ is a Dedekind group. So, $R$ contains a direct product $D$ of infinitely many non-trivial cyclic subgroups and hence by Lemma \ref{inftn}(ii) every cyclic subgroup of $R$ is normal in $G$. In particular, every subgroup of $R$ is normal in $G$. Let $H$ and $K$ be two subgroups of $G$ such that $H$ is subnormal in $K$ and assume first that $H \cap D$ is finitely generated. Then we can assume that $D\cap H=1$. Write now $D=D_1\times D_2$, where $D_1$ and $D_2$ are both of infinite total rank. Then by Lemma \ref{int1}(ii) there exist subgroups $\widehat{D}_1\le D_1$ and $\widehat{D}_2 \le D_2$ which are pronormal in $G$. This implies that $\widehat{D}_1H$ and $\widehat{D}_2H$ are pronormal in $\widehat{D}_1K$ and $\widehat{D}_2K$, respectively. However, since they are also subnormal in those groups, they are even normal. From this it follows that $K$ normalizes both $\widehat{D}_1H$ and $\widehat{D}_2H$ and hence also $H=(\widehat{D}_1H) \cap (\widehat{D}_2 H)$ is normal in $K$.

Assume now that $H \cap D$ is infinitely generated and let $x$ be an element of $G \setminus R$. By Lemma \ref{int1}(ii), there exists a subgroup $D_x$ of $H \cap D$ such that $D_x\langle x \rangle$ is pronormal in $G$. It follows that in the quotient $G/(H \cap D)$, all cyclic subgroups are pronormal. Thus, $G/(H \cap D)$ is a $\overline{T}$-group (see \cite[Corollary 2.1]{BdG1}). Since $H/(H \cap D)$ is subnormal in $K/(H \cap D)$, it is normal, and also in this case $H$ is normal in $K$.
\end{proof}

\begin{lem}\label{radT}
Let $G$ be a locally radical $\overline{T}$-group satisfying $\ac_{pr}$. Then every subgroup of $G$ is pronormal in $G$.
\end{lem}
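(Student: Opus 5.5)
The plan is to reduce, using known structure theory of $\overline{T}$-groups, to the periodic case, and then to use $\ac_{pr}$ only where the infinite structure of $G'$ forces it.

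\textbf{Reductions.} Since $G$ is locally radical and every cyclic subgroup of $G$ is pronormal, I would take from the literature on $\overline{T}$-groups (Robinson, and de Giovanni--Vincenzi) the following picture.
\begin{itemize}
\item If $G$ is not periodic, then $G$ is abelian. In that case every subgroup is normal, so certainly pronormal, and there is nothing to prove.
\item We may therefore assume that $G$ is periodic, hence locally finite and locally soluble.
\item Then $G$ is metabelian, $L=G'$ is an abelian subgroup of odd order (or at least its $2$-part is controlled), and $G/L$ is a Dedekind group.
\item Every element of $G$ induces a power automorphism on $L$. In particular, every subgroup of $L$ is normal in $G$.
\item Every finite subgroup of $G$ is a finite soluble $\overline{T}$-group. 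By Peng's theorem, all its subgroups are pronormal in it.
\end{itemize}
Consequently every subgroup $H$ of $G$ is pronormal in every finite subgroup containing it. The only obstruction to pronormality of $H$ in $G$ comes from infinite joins $\langle H,H^g\rangle$. This is exactly the Sylow-type failure of de Giovanni--Vincenzi, which lives inside the infinite $\pi$-parts of $L$.

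\textbf{Case 1: $L$ has infinite total rank.} The socle of $L$ is then a direct product of infinitely many non-trivial cyclic subgroups, each normal in $G$. Group them into infinitely many $G$-invariant factors $X_\lambda$ of infinite total rank. Fix a subgroup $H$. As in the proof of Theorem~\ref{inftytotalrank}, reduce to the situation $H\cap X\le Y$ by factoring out $H\cap X$: this is normal in $G$ because every subgroup of $L$ is normal, and both the $\overline{T}$ property and $\ac_{pr}$ pass to the quotient in the relevant way. All $X_\lambda$ are normal, so they permute with $H$. Lemma~\ref{int1}(ii), applied with $L=H$ and to two disjoint infinite subfamilies, gives normal subgroups $\widehat X_1,\widehat X_2\le L$ with $\widehat X_1\cap\widehat X_2=1$ such that $H\widehat X_1$ and $H\widehat X_2$ are pronormal in $G$.

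To conclude that $H$ is pronormal, fix $g\in G$ and set $J=\langle H,H^g\rangle$. By pronormality of $H\widehat X_i$ there are elements $u_i\in J\widehat X_i$ with $(H\widehat X_i)^{u_i}=(H\widehat X_i)^g$. I would then use the power-automorphism action to adjust $u_1$ and $u_2$ modulo $\widehat X_1$ and $\widehat X_2$ to a common element of $J$ conjugating $H$ to $H^g$. Note that $H\widehat X_1\cap H\widehat X_2=H(\widehat X_1\cap H\widehat X_2)$ equals $H$ modulo the $H$-part. This is the step I expect to be the main obstacle, because pronormality is not intersection-closed in general. I would try to replace the intersection argument by the criterion that $H$ is pronormal in $G$ if and only if $H$ is pronormal in $HL_0$ for every $H$-invariant subgroup $L_0\le L$, and then verify that criterion prime-by-prime inside $\widehat X_1$ and $\widehat X_2$ separately.

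\textbf{Case 2: $L$ has finite total rank.} Then $L$ is a periodic abelian group of finite total rank. Only finitely many primes are involved with infinite $p$-components, and for those primes the components are \v{C}ernikov. The de Giovanni--Vincenzi characterization says that a periodic $\overline{T}$-group whose commutator subgroup has this form is "pronormal everywhere" provided the non-pronormality does not arise from infinite $\pi$-subgroups of $L$. I would either quote this characterization directly, or argue by hand: reduce to $G=H L$ with $L$ a \v{C}ernikov $p$-group, and show that $H$ is pronormal by exhibiting $H$ as a Sylow-like complement. If needed, I would again use $\ac_{pr}$ on $G/L$. When $G/L$ has infinite rank, it is Dedekind, so its socle is a product of infinitely many normal cyclic factors, and Lemma~\ref{inftn}(ii) together with Lemma~\ref{int1}(ii) apply there in the same way as in Case~1.
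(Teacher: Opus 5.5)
\textbf{There is a genuine gap.} Everything hinges on the step in your Case~1 that you yourself flag as the main obstacle, and you never carry it out. From $\ac_{pr}$ you only obtain that $H\widehat X_1$ and $H\widehat X_2$ are pronormal in $G$ with $H\widehat X_1\cap H\widehat X_2=H$. But pronormality is not intersection-closed. You give no mechanism for turning $u_1\in\langle H,H^g\rangle\widehat X_1$ and $u_2\in\langle H,H^g\rangle\widehat X_2$ into a \emph{single} element of $\langle H,H^g\rangle$ that conjugates $H$ onto $H^g$. (In Theorem~\ref{inftytotalrank} the intersection trick works only because the goal there is normality of $H$ in $K$, which is intersection-closed.)

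The reduction to $H\cap X\le Y$ is not automatic either. If $H$ contains all but a finite part of the socle, then $X/(H\cap X)$ is no longer an infinite direct product.

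Case~2 is only a sketch, and it puts the obstruction in the wrong place. The relevant normal subgroup is $L=[G',G]$, not $G'$, whose $2$-part need not vanish, as $Q_8$ shows. Moreover $L$ is a $\pi'$-group for $\pi=\pi(G/L)$, so ``infinite $\pi$-subgroups of $L$'' do not exist.

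The missing idea is that, in a periodic metabelian $\overline{T}$-group, non-pronormality comes from Sylow $\pi$-subgroups that fail to cover $G/L$. By the criterion the paper quotes as [Sub, Lemma 8], all subgroups are pronormal as soon as $G=PL$ for every Sylow $\pi$-subgroup $P$. So the case distinction should be on $P$ rather than on $L$, and $\ac_{pr}$ should be applied to $P$ itself rather than to the socle of $L$. This is the paper's route, starting from the fact that $P\cong PL/L$ is Dedekind.

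If $P$ is not \v{C}ernikov, Lemma~\ref{int1}(i) with trivial $L$ makes $P$ pronormal. Since $PL\trianglelefteq G$ (because $G/L$ is Dedekind), a Frattini argument gives $G=N_G(P)L$. This forces $G=PL$, because $N_G(P)/P$ is a $\pi'$-group.

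If $P$ is \v{C}ernikov, its finite residual centralizes $L$, because $G$ acts on $L$ by power automorphisms and so $G/C_G(L)$ is residually finite. Then for any $F$ with $F/PL$ finite and non-trivial, one gets a complement to $L$ in $F$ and conjugacy of the Sylow $\pi$-subgroups of $F$, contradicting $PL<F$.

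Note that when $L$ has finite total rank, i.e.\ is \v{C}ernikov (your Case~2), covering is automatic, since then $G/C_G(L)$ is finite. So all the real content lies in your Case~1, exactly where your argument stops.
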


\begin{proof}
Let $H$ be a finitely generated subgroup of $G$. Since a locally nilpotent $\overline{T}$-group is a Dedekind group, any section of $H$ contains a normal abelian subgroup, so $H$ is hyperabelian. Then $G$ is locally soluble and hence even metabelian, since soluble $\overline{T}$-groups are metabelian (see \cite{Robtra}). If $G$ is non-periodic, then $G$ is abelian, so we can assume that $G$ is periodic. Let $L = [G', G]$. Then $\pi(L) \cap \pi(G/L) = \emptyset$ and $2 \notin \pi(L)$. Let $\pi = \pi(G/L)$, and let $P$ be a Sylow $\pi$-subgroup of $G$. Note that $PL$ is a normal subgroup of $G$ and $P \cong PL/L$. Since $P$ is nilpotent, it satisfies $\ac_{n}$. If $P$ is not a \v{C}ernikov group, it is a Dedekind group, so, by Lemma \ref{int1}(i), $P$ is pronormal in $G$ and hence $G = N_G(P)L$, which is in contradiction to the fact that obviously $N_G(P)/P$ is a $\pi'$-group. Therefore, $P$ is a \v{C}ernikov group. Let now $F/PL$ be a non-trivial finite subgroup of $G/PL$. Since every subgroup of $L$ is normal in $G$, $G$ acts on $L$ as a group of power automorphisms, so, in particular, $G/C_G(L)$ is residually finite. Then, the finite residual $D$ of $P$ must be contained in $C_G(L)$. Now $F/C_F(L)$ is finite, so there exists a complement $X$ of $L$ in $F$ (see for instance \cite[Theorem 2.4.5]{Dix}). We have that $F$ is abelian-by-finite so its Sylow $\pi$-subgroups are conjugate but this is not possible for the choice of $F$. Therefore, we have that $G = PL$ for every Sylow $\pi$-subgroup $P$ of $G$ and so by \cite[Lemma 8]{Sub} we have that all subgroups of $G$ are pronormal in $G$.
\end{proof}

We can now combine the previous structural restrictions with the radical-by-finite hypothesis to obtain a clean dichotomy. This mirrors the pattern already met for the other classes of subgroups, and completes the parallel between the different antichain conditions studied in the paper.

\begin{theo}\label{radual}
    Let $G$ be a radical-by-finite group satisfying $\ac_{pr}$. Then $G$ is minimax or all its subgroups are pronormal in $G$.
\end{theo}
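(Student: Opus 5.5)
Assume $G$ is radical-by-finite, satisfies $\ac_{pr}$, and is not minimax. The plan is to show first that $G$ is a $\overline{T}$-group, and then to invoke Lemma~\ref{radT}. Lemma~\ref{radT} needs $G$ locally radical. A radical-by-finite $\overline{T}$-group should be locally radical: the finitely generated subgroups are locally (soluble-by-finite), so the $\overline{T}$ property makes them soluble-by-finite, and soluble $\overline{T}$-groups are metabelian. If this route turns out not to give that directly, I will instead run the argument of Lemma~\ref{radT} on the radical part and treat the finite top separately. So the core of the proof is reducing to the $\overline{T}$ property.

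\textbf{Step 1 (infinite total rank in the radical).} Let $R$ be the Hirsch--Plotkin radical of $G$, or more generally the largest radical normal subgroup, which has finite index in $G$. If $R$ has infinite total rank, Theorem~\ref{inftytotalrank} shows directly that $G$ is a $\overline{T}$-group.

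\textbf{Step 2 (finite total rank).} Now suppose $R$ has finite total rank. Since $G$ is not minimax, \cite[Theorem 10.35]{Rob72} gives an abelian non-minimax subgroup $A$ of finite total rank. I would then copy the proof of Theorem~\ref{minan}. Take a free subgroup $B\le A$ with $A/B$ periodic; $B$ is finitely generated because the rank is finite. Since $A$ is not minimax, $A/B$ has infinitely many primes in its spectrum, so $A/B$ has an infinite socle, which is a direct product of infinitely many non-trivial cyclic factors. Write it as $\Dir_{i} S_i/B$ with infinitely many $S_i$.

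For each element $x\in G$, choose an infinite subfamily avoiding $\langle x\rangle$ modulo $B$. This is possible because $\langle x\rangle\cap A$ modulo $B$ meets only finitely many primes. Lemma~\ref{int1}(ii) then gives a subgroup $\widehat{S}_x$, with $B\le \widehat{S}_x\le A$, such that $\widehat{S}_x\langle x\rangle$ is pronormal in $G$. The aim is to conclude that cyclic subgroups are pronormal modulo suitable subgroups of $A$. Then, as in the second half of the proof of Theorem~\ref{inftytotalrank}, I pass to quotients: modulo $B^{p^k}$, or modulo the intersections $\widehat{S}_x\cap\widehat{S}'_x$ obtained from two disjoint infinite subfamilies, every cyclic subgroup becomes pronormal. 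By \cite[Corollary 2.1]{BdG1} these quotients are $\overline{T}$-groups. Finally I intersect, using that $B$ is finitely generated free and $\bigcap_k B^{p^k}=1$, to get that a subnormal subgroup $H$ of $K$ is normal in $K$ modulo each such kernel, and hence normal in $K$.

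\textbf{Main obstacle.} The hard part is the last intersection step. Pronormality, and even the $\overline{T}$ property, does not obviously pass from a family of quotients back to $G$ the way normality does in Theorem~\ref{fullnormal}. I plan to avoid lifting pronormality at all and to work only with the $\overline{T}$ condition "subnormal implies normal". If $H\,\mathrm{sn}\,K$, then $HN/N$ is normal in $KN/N$ for each kernel $N$ in a family with trivial intersection. Hence $K$ normalizes $\bigcap_N HN$. What remains is to show that this intersection equals $H$. I expect to get this by choosing the kernels $N$ inside $A\cap H$, or disjoint from $H$, as in the two cases of the proof of Theorem~\ref{inftytotalrank}, so that the two complementary choices $\widehat{S}_1 H\cap\widehat{S}_2 H=H$ recover $H$.
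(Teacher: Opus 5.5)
Your Step~1 and the reduction to Lemma~\ref{radT} match the paper, but Step~2 has a genuine gap.

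\textbf{The inputs to Lemma~\ref{int1}(ii) are missing.} The abelian subgroup $A$ given by \cite[Theorem 10.35]{Rob72} need not lie in $R$, so its finite total rank is unjustified. Nothing guarantees $\langle x\rangle S_i=S_i\langle x\rangle$. And $B$ is never shown to be normal, so $G/B^{p^k}$ is not defined.

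\textbf{The pronormality step does not give what you need.} Even granting that $\widehat S_x\langle x\rangle$ is pronormal, this says nothing about $\langle x\rangle B^{p^k}/B^{p^k}$. The reason is that $\widehat S_x$ is an infinite join of the $S_i$ and is not contained in $B^{p^k}$. Replacing $\widehat S_x$ by $\widehat S_x\cap\widehat S'_x$ would require intersecting two pronormal subgroups, which is not legitimate. For instance, in $S_4$ the stabilizer of $4$ and $\langle(1234),(13)\rangle$ are maximal, hence pronormal, but they meet in the non-pronormal subgroup $\langle(13)\rangle$. This is why the proof of Theorem~\ref{inftytotalrank} only intersects after upgrading pronormal to normal via subnormality.

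\textbf{The final step you leave open fails as formulated.} Being a $\overline{T}$-group does not pass from quotients $G/N$ with $\bigcap N=1$ back to $G$. In $D_\infty=\langle t\rangle\rtimes\langle s\rangle$, every quotient $D_\infty/\langle t^{3^k}\rangle$ is dihedral of order $2\cdot 3^k$, hence a $\overline{T}$-group. Yet $\langle s,t^4\rangle$ is subnormal and not normal, and indeed $\bigcap_k\langle s,t^4\rangle\langle t^{3^k}\rangle=D_\infty$. An element inverting the torsion-free part of $R$ is exactly what must be excluded in the finite-rank case.

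The paper does this as follows.
\begin{enumerate}
\item Every subgroup of $R$ is normal in $G$. Each finite-index $L\le F$ containing an aperiodic $x$ is normal by Lemma~\ref{inftn}(ii), because $R/L$ has infinite total rank, and $\langle x\rangle$ is the intersection of such $L$.
\item Hence $G$ acts on $R$ by $\pm1$.
\item An inverting $g$ is impossible: $G/V^4$ is a $\overline{T}$-group by Theorem~\ref{inftytotalrank}, while $\langle g\rangle V^4/V^4$ is subnormal and not normal.
\end{enumerate}

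\textbf{Your route can be repaired along different lines.}
\begin{enumerate}
\item $R$ is Dedekind by Theorem~\ref{fullnormal}. Being non-minimax of finite total rank, it is abelian and non-periodic.
\item Take $B\le R$ finitely generated free with $R/B$ periodic. Lemma~\ref{inftn}(ii) makes $B$ normal in $G$.
\item Apply Theorem~\ref{inftytotalrank} directly to $G/B^m$ for \emph{every} $m\ge1$, not just powers of one prime. This is valid because $\ac_{pr}$ passes to quotients and the Hirsch--Plotkin radical of $G/B^m$ contains $R/B^m$, which has infinite total rank. No pronormal cyclic subgroups are needed.
\item Subgroups of the finitely generated abelian group $B$ are closed in its profinite topology, so $\bigcap_m (H\cap B)B^m=H\cap B$. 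This gives $\bigcap_m HB^m=H$.
\item If $H$ is subnormal in $K$, then $K$ normalizes every $HB^m$, hence normalizes $H$.
\end{enumerate}
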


\begin{proof}
By Lemma \ref{radT}, it suffices to prove that if $G$ is not minimax, then $G$ is a $\overline{T}$-group. Assume $G$ is not minimax, so that the Hirsch-Plotkin radical $R$ of $G$ is not minimax, too. Since in a locally nilpotent group pronormality and normality coincide, $R$ is a Dedekind group by Theorem \ref{fullnormal}. If $R$ has infinite total rank, then by Theorem \ref{inftytotalrank}, $G$ is a $\overline{T}$-group, so we can suppose that $R$ has finite total rank. In particular, $R$ is abelian, $R=T\times A$, where $T$ is the torsion subgroup of $R$, which is a \v Cernikov group, and $A$ is not minimax.

Let $x$ be an element of infinite order in $R$ and let $F$ be a free abelian subgroup of $R$ containing $x$ such that $R/F$ is periodic. Let $L$ be a subgroup of finite index in $F$ containing $x$. Then $R/L$ has infinite total rank, so $L$ is normal in $G$ by Lemma \ref{inftn}(ii). Since $R$ has finite total rank, $F$ is finitely generated, so $\langle x\rangle$ equals the intersection of all the subgroups of finite index of $F$ containing it. In particular, $\langle x \rangle$ is normal in $G$. Since $R$ is generated by its aperiodic elements, it follows that every subgroup of $R$ is normal in $G$.

Suppose now for a contradiction that $G$ is not abelian. Then $C_G(R) = R$ has index 2 in $G$, and every element outside $R$ acts as the inversion on $R$. Write $G = \langle g \rangle R$ where $g^2$ belongs to $R$. Let $V$ be a free abelian subgroup of $R$ such that $R/V$ is periodic, and let $W = V^4$. Then $R/W$ has infinite total rank, so $G/W$ is a $\overline{T}$-group again by Theorem \ref{inftytotalrank}. However, $\langle g\rangle W$ is clearly subnormal and not normal in $G/W$ and this gives a contradiction.
\end{proof}

\begin{cor}\label{fgnmin}
    Let $G$ be a locally radical group satisfying $\ac_{pr}$. If $G$ has a finitely generated subgroup that is not minimax, then all subgroups of $G$ are pronormal in $G$.
\end{cor}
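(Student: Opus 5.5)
Let $E$ be a finitely generated subgroup of $G$ that is not minimax. The plan is to show first that $G$ is a $\overline{T}$-group and then to apply Lemma~\ref{radT}. Since $G$ is locally radical, $E$ is radical, and so it is radical-by-finite. The property $\ac_{pr}$ does not obviously pass to subgroups, because pronormality in $E$ is not pronormality in $G$. So I will not apply Theorem~\ref{radual} to $E$ itself. Instead I will rerun the antichain arguments inside $G$, using the sections of $E$ that Theorem~\ref{radual} and Theorem~\ref{inftytotalrank} exploit.

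\textbf{Step 1: find an infinite direct product.} Take a finitely generated subgroup $X$ of $G$ with $E\le X$. Then $X$ is radical and not minimax, so by \cite[Theorem 10.35]{Rob72} it contains an abelian non-minimax subgroup $A$. Since $X$ is finitely generated, I plan to take $A$ inside the Hirsch--Plotkin radical of $X$, after modding out a suitable finitely generated free subgroup $B$. Then $A/B$ has infinite total rank and so contains an infinite direct product of nontrivial cyclic subgroups.

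\textbf{Step 2: show that cyclic subgroups of $X$ are pronormal in $X$.} Let $x\in X$, and choose $B$ so that $\langle x\rangle\cap A\le B$. Lemma~\ref{int1}(ii), applied with $L=\langle x\rangle$, gives a subgroup $\widehat A\le A$ such that $\widehat A\langle x\rangle$ is pronormal in $G$, and hence pronormal in $X$. Running this with two complementary infinite parts $A_1/B, A_2/B$ of the product, as in the proof of Theorem~\ref{inftytotalrank}, and intersecting, I would like to deduce that every subnormal subgroup of any subgroup $K$ of $X$ is normal in $K$. The argument is this: $\widehat A_iH$ is pronormal and subnormal in $\widehat A_iK$, hence normal there. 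This requires $\widehat A_i$ to be normalized by $K$. That is the main obstacle, because Lemma~\ref{inftn}(ii) gives normality only for ascendant pieces. I would try to handle it by working modulo $B^{n}$ for varying $n$, as in the last paragraph of Theorem~\ref{radual}. There the quotients $X/B^n$ have an abelian normal section of infinite total rank, and Theorem~\ref{inftytotalrank} applies to give that they are $\overline{T}$-groups. Intersecting over $n$ (residual finiteness of $B$) would then transfer the $\overline{T}$ property to $X$.

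\textbf{Step 3: pass to $G$ and conclude.} Every finitely generated subgroup of $G$ lies in such an $X$, so every cyclic subgroup of $G$ is pronormal in every finitely generated subgroup containing it. Pronormality of $\langle g\rangle$ is checked inside $\langle g,g^h\rangle$, so every cyclic subgroup of $G$ is pronormal in $G$. Since $G$ is locally radical, it is locally (soluble-by-finite), so \cite[Corollary 2.1]{BdG1} shows that $G$ is a $\overline{T}$-group. Finally, Lemma~\ref{radT} gives that every subgroup of $G$ is pronormal in $G$.

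The hardest point is Step~2: making Lemma~\ref{inftn}(ii) and Theorem~\ref{inftytotalrank} usable for a subgroup $X$ whose pieces are not ascendant in $G$. If this fails, the fallback is to argue directly with Lemma~\ref{int1}(ii) applied to the subgroups $\langle x\rangle\langle y\rangle$, as in Theorem~\ref{radhami}.
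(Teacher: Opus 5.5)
There is a genuine gap, and it comes from a false premise. You say that $\ac_{pr}$ does not obviously pass to subgroups, but it does, and this is the missing idea. If $J\le X\le G$ and $J$ is pronormal in $G$, then $J$ is pronormal in $X$, because for $g\in X$ the conjugacy of $J$ and $J^g$ takes place inside $\langle J,J^g\rangle\le X$. Now take a family of subgroups of $X$ witnessing the failure of $\ac_{pr}$ in $X$. Mutual permutability and the condition $H_\lambda\not\le\bigvee_{\gamma\ne\lambda}H_\gamma$ do not depend on the ambient group. A join that is not pronormal in $X$ is, by the remark just made, not pronormal in $G$. So the same family violates $\ac_{pr}$ in $G$. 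With this, the paper's route is immediate, and it is what its proof tacitly uses. For any finitely generated $Y\le G$, the subgroup $\langle E,Y\rangle$ is radical, satisfies $\ac_{pr}$, and is not minimax, since minimax is subgroup-closed. Theorem~\ref{radual} then makes it a $\overline{T}$-group, hence $Y$ is one, and your Step 3 concludes.

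Without this observation, your Step 2 does not work.
\begin{itemize}
\item Lemma~\ref{int1}(ii) needs $\langle x\rangle A_\lambda=A_\lambda\langle x\rangle$ for every factor $A_\lambda/B$, and you never arrange this.
\item The normality of $\widehat A_i$ in $\widehat A_iK$ is, as you admit, unavailable, since Lemma~\ref{inftn}(ii) requires ascendance in $G$.
\item The proposed repair is circular. The quotient $X/B^n$ is defined only if $B^n\trianglelefteq X$. Applying Theorem~\ref{inftytotalrank} to it requires that $X/B^n$ satisfy $\ac_{pr}$, which is exactly the hereditary property you declined to use. It also requires that the Hirsch--Plotkin radical of $X/B^n$ have infinite total rank. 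In Theorem~\ref{radual} these facts came from first proving, under $\ac_{pr}$ for the group itself, that every subgroup of $R$ is normal.
\item The transfer ``intersecting over $n$'' is false in general. Take $D_\infty=\langle a,b\rangle$. The quotients $D_\infty/\langle a^n\rangle\cong D_{2n}$ with $n$ odd are $\overline{T}$-groups, and $\bigcap_{n \text{ odd}}\langle a^n\rangle=1$. Yet $\langle a^4,b\rangle\trianglelefteq\langle a^2,b\rangle\trianglelefteq D_\infty$, while $b^a=a^{-2}b\notin\langle a^4,b\rangle$, so $D_\infty$ is not even a $T$-group.
\end{itemize}
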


\begin{proof}
    Let $X$ be a finitely generated subgroup of $G$ that is not minimax. By Theorem \ref{radual}, $X$ is a $\overline{T}$-group. Since being a $\overline{T}$-group is a local property, every finitely generated subgroup of $G$ is a $\overline{T}$-group, implying $G$ is a $\overline{T}$-group.
    By Lemma \ref{radT}, every subgroup of $G$ is pronormal.
\end{proof}

\begin{cor}\label{LRCor}
    Let $G$ be a locally radical group of finite total rank satisfying $\ac_{pr}$. Then $G$ is minimax or all its subgroups are pronormal in $G$.
\end{cor}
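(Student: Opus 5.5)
Assume that $G$ is not minimax. The aim is to show that $G$ is a $\overline{T}$-group; Lemma~\ref{radT} then yields that every subgroup of $G$ is pronormal in $G$. The idea is to pass to finitely generated subgroups, where Theorem~\ref{radual} applies, and to use finite total rank to transfer non-minimaxness to a finitely generated level. Corollary~\ref{fgnmin} handles the case in which some finitely generated subgroup of $G$ is not minimax. So we may assume that every finitely generated subgroup of $G$ is minimax, and we look for a contradiction, or else show directly that $G$ is $\overline{T}$.

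First we get structure. A locally radical group with all finitely generated subgroups minimax is locally (soluble-by-finite) of finite total rank. By the standard theory (Šunkov/Baer--Heineken type results for locally soluble-by-finite groups of finite rank), $G$ is then hyperabelian-by-finite, and in fact soluble-by-finite modulo a locally finite radical section with Černikov Sylow subgroups. More concretely, finite total rank forces each abelian section to be of finite total rank. Hence the Hirsch--Plotkin radical $R$ of $G$ is locally nilpotent of finite total rank, and $G/R$ is abelian-by-finite, since in locally radical groups of finite rank $G/R$ embeds in a finite-rank linear-type group.

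Since $G$ is not minimax, some abelian section $U/V$ of $G$ is of finite total rank but not minimax. Such a section has a torsion-free quotient of finite rank that is not minimax, or else infinitely many primes in its periodic part. In either case we obtain quotients $U/W$ that are periodic with infinitely many primes in their socle, i.e.\ containing a direct product of infinitely many cyclic groups. Applying Lemma~\ref{int1}(ii) and Lemma~\ref{inftn}(ii) to such sections, in the manner of the proofs of Theorems~\ref{inftytotalrank} and~\ref{radual}, we aim to show that for any cyclic subgroup $\langle x\rangle$ we can find $\widehat{X}$ with $\langle x\rangle\widehat{X}$ pronormal and $\langle x\rangle\cap\widehat{X}$ controlled. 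Intersecting two such choices coming from disjoint infinite subfamilies should recover pronormality of $\langle x\rangle$ modulo a subgroup that can be made arbitrarily small. Local finiteness of the relevant sections, together with finite rank, should let us conclude that every cyclic subgroup is pronormal. By \cite[Corollary 2.1]{BdG1}, $G$ is then a $\overline{T}$-group.

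The main obstacle is the passage in the previous paragraph. Pronormality is not finite-intersection-closed, so Lemma~\ref{int1}(iii) is unavailable. The intersection trick worked in Theorem~\ref{inftytotalrank} only because of subnormality. My first attempt would mirror that proof: take $H$ subnormal in $K$, use pronormal subgroups $\widehat{D}_1H$ and $\widehat{D}_2H$ that are also subnormal, hence normal in $\widehat{D}_iK$, and intersect. This requires the infinitely generated direct product to lie in a normal locally nilpotent subgroup. I would secure this by choosing the section $U/V$ inside $R$, which is possible since $G/R$ is abelian-by-finite, and non-minimaxness can be pushed into $R$ or into $G/R$. If it lies in $G/R$, I would instead apply Theorem~\ref{radual} to the radical-by-finite subgroups generated by $R$ and finitely many elements.
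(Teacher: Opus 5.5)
Your first step matches the paper: if some finitely generated subgroup of $G$ is not minimax, Corollary~\ref{fgnmin} gives the conclusion. The gap is in the remaining case, where every finitely generated subgroup is minimax.

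\textbf{How the paper closes the second case.} It does so in one line. Since all finitely generated subgroups are minimax and $G$ has finite total rank, Lemma~10.39 of \cite{Rob72} gives that $G$ is hyperabelian, hence radical. Theorem~\ref{radual} applied to $G$ itself then says that $G$ is minimax or all its subgroups are pronormal. You yourself assert that $G$ is hyperabelian-by-finite. That already makes $G$ radical-by-finite, so it meets the hypotheses of Theorem~\ref{radual}, but you never draw this conclusion.

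\textbf{Where your argument stops.} Instead of applying Theorem~\ref{radual}, you try to rebuild the proofs of Theorems~\ref{inftytotalrank} and~\ref{radual} by hand, and that reconstruction is not carried out. The decisive step is obtaining pronormality of cyclic subgroups even though pronormality is not intersection-closed. You phrase it only as ``we aim to show'', ``should recover'', ``should let us conclude'', and you call it the unresolved main obstacle. So, as written, the second case is not proved.

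\textbf{Two further problems.} First, your structural input rests on unspecified ``standard theory''. In particular, the claim that $G/R$ is abelian-by-finite via a ``linear-type'' embedding is unsupported. What is needed is a precise reference showing that $G$ is radical(-by-finite); nothing about $G/R$ is required.

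Second, your fallback of applying Theorem~\ref{radual} to the subgroups $\langle R,x_1,\dots,x_n\rangle$ fails in exactly the case it is meant for, namely $R$ minimax and $G/R$ abelian-by-finite. Each such subgroup is then an extension of $R$ by a finitely generated abelian-by-finite group, which is polycyclic-by-finite. So the subgroup is minimax, and Theorem~\ref{radual} tells you nothing about it.

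The fix is simply to justify that $G$ is radical, via a precise reference such as Lemma~10.39 of \cite{Rob72}, and apply Theorem~\ref{radual} directly to $G$.
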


\begin{proof}
    If there exists a finitely generated subgroup of $G$ that is not minimax, the result follows from Corollary \ref{fgnmin}.
    Suppose all finitely generated subgroups are minimax. Since $G$ has finite total rank, by Lemma 10.39 of \cite{Rob72}, $G$ is hyperabelian (and thus radical). The result follows from Theorem \ref{radual}.
\end{proof}

We now rule out the infinite locally finite simple case by a close inspection on families of finite simple groups.

\begin{prop}\label{simplepro}
Let $G$ be a locally finite group satisfying $\ac_{pr}$. Then $G$ is soluble-by-finite.
\end{prop}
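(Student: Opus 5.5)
We would argue by contradiction, assuming that $G$ is a locally finite group satisfying $\ac_{pr}$ which is not soluble-by-finite, and reducing to the case of an infinite simple section.

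\emph{The \v Cernikov and infinite-rank cases.} If $G$ is \v Cernikov it is soluble-by-finite, so we may assume $G$ is not \v Cernikov. If $G$ contains an infinite abelian subgroup that is a direct product of infinitely many non-trivial cyclic groups, normalised by enough of $G$, we can try to proceed as in the proof of Theorem~\ref{radhami} and Theorem~\ref{thnm}. By a result of Zai\v cev, every finite subgroup $X$ of $G$ normalises such a product $E=\Dir_{i\in\mathbb N}E_i$ with $X$-invariant factors and $E\cap X=1$. Lemma~\ref{int1}(ii) then gives $\widehat E\le E$ with $\widehat E\langle x\rangle$ pronormal in $G$ for each $x\in X$. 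We would then like to deduce that every cyclic subgroup of $X$ is pronormal in $X$. This transfer is not automatic: pronormality of $\widehat E\langle x\rangle$ in $G$ restricts to pronormality in $\widehat E X$, and we must pass modulo $\widehat E$. That step needs $\widehat E$ to be normal in $\widehat E X$, which is why we keep $\widehat E$ $X$-invariant. If this works, every finite subgroup is a $\overline T$-group, finite $\overline T$-groups are soluble, so $G$ is locally soluble and $\overline T$, hence metabelian, which is a contradiction. The difficulty is that a non-soluble-by-finite locally finite group need not be locally soluble, and Zai\v cev's theorem only applies in suitable settings. So the plan is to reduce first: by Proposition~\ref{radbyfin}-type arguments together with Hall–Kulatilaka and Kargapolov, a locally finite group that is not soluble-by-finite has either an infinite abelian subgroup of infinite rank or a section that is an infinite simple (or a product of infinitely many non-abelian finite simple) group.

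\emph{Infinitely many simple factors.} If a section $X/Y$ is a direct product of infinitely many non-trivial factors, Lemma~\ref{int1}(ii) and Lemma~\ref{inftn} give strong pronormality conditions. Pronormality in a product of non-abelian simple groups fails for suitable diagonal-type or non-self-normalising subgroups, for example a non-pronormal subgroup of $A_5$ or of $\mathrm{PSL}_2(q)$. Taking one such subgroup $H_i$ in each factor yields an antichain of mutually permuting, independent subgroups whose infinite joins are not pronormal. This violates $\ac_{pr}$, so this case is excluded.

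\emph{Infinite locally finite simple sections (main obstacle).} The remaining case is an infinite locally finite simple section $S$. By the Classification of Finite Simple Groups (via the Kegel cover theory and Hartley–Shute/Belyaev results), $S$ is either of Lie type over an infinite locally finite field or has a Kegel cover by finite simple groups of unbounded Lie rank or alternating groups. We would construct inside $S$ an infinite family of pairwise commuting subgroups $H_i$, for instance in commuting blocks: disjoint-support copies of $A_4$ or $S_3$ in alternating-type covers, or root subgroups and commuting $\mathrm{SL}_2$'s in Lie type. These should generate a restricted direct product, be independent, and have joins that are never pronormal in $S$. Non-pronormality would be witnessed by an element conjugating the join to a subgroup that is not conjugate to it inside their join. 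For rank-bounded Lie type over infinite fields we would use an infinite elementary abelian subgroup of a root group, i.e.\ an infinite direct product of cyclic groups, and Lemma~\ref{good}-like arguments forcing its subgroups to be pronormal, which fails inside a unipotent radical under torus action. Checking non-pronormality uniformly across these families is the expected hardest step. Finally, we would lift the contradiction from the section to $G$: pronormality passes to quotients and to intermediate subgroups in the required direction, so full preimages of the $H_i$ again form a forbidden antichain.
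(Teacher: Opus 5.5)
Your proposal has a genuine gap at the reduction step, which is where most of the work lies. You assert that a locally finite group that is not soluble-by-finite has either an abelian subgroup of infinite rank, or a section that is infinite simple or an infinite direct product of non-abelian finite simple groups. Nothing you cite gives this. Proposition~\ref{radbyfin} assumes that the locally finite members of the class are already soluble-by-finite, so invoking it here is circular. Hall--Kulatilaka--Kargapolov only yields infinite abelian subgroups.

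Moreover, the first alternative holds in every non-\v{C}ernikov locally finite group, so it cannot feed your first case. That case needs, for \emph{every} finite $X$, an $X$-invariant infinite direct product of cyclic groups. The paper uses Zai\v{c}ev's theorem only inside a locally soluble group, and the statement you need is false in general: in $\mathrm{PSL}_2(\overline{\mathbb{F}}_p)$ the subgroup $\mathrm{PSL}_2(p^n)$ with $p^n\ge 4$ normalizes no infinite abelian subgroup. So a group whose locally soluble radical has infinite index is not handled, unless you prove a structure theorem you have not stated.

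The paper needs no such theorem. Your first-case argument, run inside the locally soluble radical $L$ (where Zai\v{c}ev applies), shows that every finite subgroup of $L$ is a $T$-group, so $L$ is soluble. If $|G:L|$ is infinite, the simple case together with \cite[Lemma~3.32]{KeWe} shows that a Sylow $2$-subgroup $S$ of $G$ is not \v{C}ernikov. The paper then uses Lemma~\ref{inftn}(ii) to make every subgroup of $S$ pronormal in $G$, and \cite[Corollary~3.52]{BIT} forces $G$ to be locally soluble, i.e.\ $G=L$. Your treatment of sections that are infinite direct products of non-abelian simple groups, and your lifting through sections, are correct ($\ac_{pr}$ is inherited by sections), but that is not where the difficulty lies.

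The simple case is also only a programme. For linear groups your root-group idea can be made rigorous and is essentially the paper's $A_1$ argument extended to all types. Since $\ac_{pr}$ passes to subgroups, and the centre of a root subgroup is an infinite elementary abelian normal subgroup of a Borel subgroup $B$, Lemma~\ref{inftn}(ii) applied in $B$ makes all its subgroups normal in $B$; the torus acting by scalars from an infinite field forbids this. The paper instead shows that maximal $p$-subgroups of infinite rank must be Dedekind. It then rules out the types one by one via the Hartley--Shute nilpotency-class computations, handling $A_1$, $B_2$ in characteristic $2$, ${}^2B_2$ and ${}^2A_2$ by hand.

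For non-linear groups, however, your commuting blocks do not exist as described. A Kegel cover only provides simple sections $H_i/M_i$ of finite subgroups, not pairwise commuting subgroups of $G$ whose joins you could control, and you leave the non-pronormality of those joins unchecked. The paper avoids building antichains here too. Hartley--Shute's Theorems~A and~B give Kegel factors of unbounded rank and failure of min-$p$, so $G$ has $p$-subgroups of infinite rank. $\ac_{pr}$ forces these to be Dedekind, whereas growing rank produces non-Dedekind $p$-subgroups.
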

\begin{proof}
Suppose first that $G$ is simple. Assume for a contradiction that $G$ is infinite. Suppose first that $G$ is linear, and let $B=T\ltimes P$ be a Borel subgroup of $G$, where $T$ is a maximal torus and $P$ is a maximal $p$-subgroup of $G$. It follows from \cite[Theorem A]{HartShu}, for example, that $G$ is a group of Lie type over an infinite locally finite field $K$ of positive characteristic, say $p$. Notice that a maximal $p$-subgroup of $G$ contains an elementary abelian subgroup of infinite total rank, so it must be a Dedekind group by Lemma \ref{inftn}(ii). Consider first the case in which $G$ is of type $A_1$ (i.e. $\operatorname{PSL}_{2}(K)$). Then $P$ is an infinite elementary abelian $p$-group. Since any $\operatorname{A}_1(p^n)$ has a semisimple element of order $\frac{p^n+1}{p-1}$, hence acting irreducibly on a unipotent subgroup of $\operatorname{A}_1(p^n)$, $P$ contains subgroups which are not pronormal in $B$ and this contradicts Lemma \ref{inftn}(ii).

Let now $G$ be a group of one of the types listed below, subject to the indicated restrictions on the characteristic $p$:

\begin{center}
\begin{tabular}{lll}
\textbf{Type of} $\mathbf{G}$ & \textbf{Rank} & \textbf{Restriction on} $\mathbf{p}$ \\
$B_n$ & $n > 1$ & $p \neq 2$ \\
$C_n$ & $n > 2$ & $p \neq 2$ \\
$G_2$ & --- & $p \notin \{2, 3\}$ \\
$E_6, E_7, E_8$ & --- & None \\
\end{tabular}
\end{center}

By \cite[Lemma 4.2]{HartShu}, the nilpotency class of a maximal $p$-subgroup of $G$ is the height $h(R)$ of the (unique) highest root $R$ as in \cite[Theorem 5.3.3]{Cart}. By \cite[Proposition 10.2.5 and the subsequent remark]{Cart}, we see that $h(R)\geq3$ for all the types in the table, and this contradicts the fact that $P$ is a Dedekind group.

For $G_2$ in characteristic $2$ or $3$, the same contradiction follows from \cite[Lemma 4.5]{HartShu}.

On the other hand, if $G$ is of type $B_n$ or $C_n$ over a field of characteristic $2$, \cite[Lemma 4.3]{HartShu} gives that the nilpotency class of $P$ is $2n-2$, so the only option is $n=2$ and hence $G$ is of type $B_2$;
however, also in this case one gets to a contradiction because $B_2(2)$ contains a dihedral subgroup of order $8$. 

\medskip

Suppose now that $G$ is a twisted group of the following types $$^2A_n\,\,(n> 1),\, ^2B_2,\, ^2D_n\,\, (n>3),\, ^3D_4,\, ^2E_6,\, ^2F_4,\, ^2G_2$$ and take first into account the cases in which $G$ is of type ${}^2 A_n$, ${}^2 D_n$ or ${}^2 E_6$. By \cite[Lemma 4.7]{HartShu}, the nilpotency class of $P$ is the same as that of the corresponding maximal $p$-subgroup in the untwisted case, which is, as already said, the height $h(R)$ of the highest root $R$. Again by \cite[Proposition 10.2.5]{Cart} we have that for ${}^2 A_n$, $h(R)$ is equal to $n$, for ${}^2 D_n$, $h(R)$ is equal to $2n-3$ and for ${}^2 E_6$, $h(R)$ is equal to $11$. By \cite[Lemmas 4.8, 4.9 and 4.10]{HartShu}, the nilpotency class of a Sylow $p$-subgroup of a group of type ${}^3D_4$, ${}^2 F_4$, or ${}^2G_2$ is $5$, $8$, or $4$, respectively. Thus, the only types remaining for $G$ are
$$^2A_2,\, ^2B_2.$$

Now, if $G$ is a group of type ${}^2B_2$ (namely a Suzuki group), then its Sylow $2$-subgroups are well-known to be not Dedekind (see for instance \cite[Theorem 6]{Suz}), so this case is ruled out.

Let finally $G$ be a group of type ${}^2A_2$, namely a unitary group $PSU(n+1,q)$. We already noticed that the nilpotency class of $P$ is $2$, so $p$ must be $2$. Take 
$$g = \left( \begin{array}{ccc} 
1 & 1 & b \\ 
0 & 1 & 1 \\ 
0 & 0 & 1 
\end{array} \right),\quad h=\left( \begin{array}{ccc} 
1 & s & t \\ 
0 & 1 & s^q \\ 
0 & 0 & 1 
\end{array} \right),
$$ subject to the following conditions: $b+b^q=1$, $t+t^q=s^{q+1}$ and $s+s^q\not \in\{0,1\}$. Notice that we can satisfy all these conditions because the trace map $z \in GF(q^2)\mapsto z+z^q\in GF(q)$ is surjective. Then it is easy to check that $g$ has period $4$ and that $h$ does not normalize $\langle g\rangle$, so that $P$ is not a Dedekind group.

This concludes our proof for the linear case.

\medskip

Assume now that $G$ is non-linear and let $\mathcal{K}=\{(H_i,M_i)\mid i\in I\}$ be a Kegel cover of $G$ (see \cite[Theorem 4.4 and Lemma 4.5]{KeWe}). By the already quoted \cite[Theorem A]{HartShu}, the family of finite simple groups $\{H_i/M_i\}$ has unbounded total rank. By the first part of the proof, growing rank always gives non-Dedekind $p$-subgroups for some prime $p$. However, \cite[Theorem B]{HartShu} yields that $G$ does not satisfy min-$p$, contradicting Lemma \ref{inftn}(ii).

This proves the statement in the simple case.

\medskip

Suppose now that $G$ is an infinite locally finite group and let $L$ be the locally soluble radical of $G$. We claim that $L$ is soluble. Assuming that $L$ does not satisfy the minimal condition, a theorem of \v Cernikov (see, for instance, \cite[Theorem 3.45]{Rob72}) yields that $L$ does not satisfy the minimal condition on abelian subgroups. Let $E$ be a finite subgroup of $L$. Then, by a result of Zaicev (\cite{Zai}) we may find an $E$-invariant abelian subgroup $A$ of $L$ not satisfying the minimal condition. Exchanging $A$ with its socle, we may assume that $A$ is the direct product of infinitely many non-trivial cyclic subgroups. Since $E$ is finite, we may take $A\cap E=1$ and write $A$ as the direct product of infinitely many $E$-invariant subgroups. Let now $F$ be a subnormal subgroup of $E$. An application of Lemma \ref{int1} (ii) makes it possible to find an $E$-invariant subgroup $\widehat{A}$ of $A$ such that $F\widehat{A}$ is pronormal in $G$. In particular, $F=(F\widehat{A})\cap E$ is pronormal in $E$ and hence normal in it. This shows that $E$ is a $T$-group, so that $E$ is metabelian and hence $L$ itself is metabelian.

\medskip

Finally, assume for a contradiction that $G/L$ is infinite and let $S$ be any Sylow $2$-subgroup of $G$, which does not satisfy the minimal condition by the first part of the proof and \cite[Lemma 3.32]{KeWe}. Then, every subgroup of $S$ is pronormal in $G$ by Lemma \ref{inftn} (ii), and \cite[Corollary 3.52]{BIT} shows that $G$ is locally soluble, which leads to the contradiction $G=L$.

\end{proof}

\begin{rem}
The statement was already claimed in \cite[Lemma 12]{Dardo2}. The proof given there contains a gap in the argument in the last paragraph, where in principle the Hirsch-Plotkin radical might be trivial (e.g. in an infinite simple locally finite group). The result is nevertheless correct, as shown in the proof provided above.
\end{rem}

We are now in a position to prove the main theorem of this section.

\begin{theo}\label{thpr}
    Let $G$ be a generalized radical group. The following conditions are equivalent:
    \begin{itemize}
        \item[(i)] $G$ satisfies Min-$\infty$ on non-pronormal subgroups;
        \item[(ii)] $G$ satisfies Max-$\infty$ on non-pronormal subgroups;
        \item[(iii)] $G$ satisfies $\DCCinf$ on non-pronormal subgroups;
        \item[(iv)] $G$ has pronormal deviation;
        \item[(v)] $G$ satisfies RCC on non-pronormal subgroups;
        \item[(vi)] Every non-minimax subgroup of $G$ is pronormal in $G$;
        \item[(vii)] $G$ is minimax or every subgroup of $G$ is pronormal;
        \item[(viii)] $G$ satisfies $\ac_{pr}$.
    \end{itemize}
\end{theo}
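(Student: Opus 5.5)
The plan follows the pattern of Theorems \ref{thannn}, \ref{radhami} and \ref{thnm}. I would first quote the known equivalences, then use Lemma \ref{rob} to pass to the antichain condition, and finally prove that $\ac_{pr}$ forces the minimax alternative. For the first step, I take the equivalence of (i)--(vii) for generalized radical groups from the work of Dardano and De Mari \cite{Dardo,Dardo2}, where the corresponding chain conditions on non-pronormal subgroups are treated. The only delicate point there is the reduction to the radical-by-finite case, which rested on \cite[Lemma 12]{Dardo2}. That statement is now supplied independently by Proposition \ref{simplepro}, because each of (i)--(v) implies $\RCC$, hence $\ac_{pr}$ by Lemma \ref{rob}. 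Also, (vii) trivially implies (vi), and (vi) implies (i): in a minimax group every descending chain has finite indices from some point on, and in the other case there are no non-pronormal subgroups at all. By Lemma \ref{rob}, (v) implies (viii). It therefore remains to show that (viii) implies (vii).

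Assume $G$ is generalized radical and satisfies $\ac_{pr}$. The first goal is to show that $G$ is radical-by-finite, via Proposition \ref{radbyfin}. Let $\mathfrak{X}$ be the class of groups all of whose sections satisfy $\ac_{pr}$. This class is closed under subgroups and homomorphic images, but I must check that $G$ itself lies in a suitable such class. I would argue directly instead. Let $N$ be the radical of $G$. Then $G/N$ has a nontrivial locally finite normal subgroup at each stage of its ascending series, whenever it is non-trivial. Any locally finite section $X/Y$ with $X\le G$ inherits a form of $\ac_{pr}$: a forbidden antichain in $X/Y$ lifts to one of $G$, since pronormality in $X/Y$ of $H/Y$ follows from pronormality of $H$ in $G$ restricted to $X$. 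Granting this inheritance, Proposition \ref{simplepro} shows that locally finite sections are soluble-by-finite, and Proposition \ref{radbyfin} then yields that $G$ is radical-by-finite.

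With $G$ radical-by-finite, Theorem \ref{radual} gives directly that $G$ is minimax or all its subgroups are pronormal, which is (vii).

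The main obstacle is the inheritance step, that is, verifying that the hypotheses of Proposition \ref{radbyfin} are genuinely met. The property $\ac_{pr}$ is not obviously closed under quotients, because pronormality of a join in $G$ need not correspond to pronormality in a quotient. My first attempt would be to avoid quotients. I would apply Proposition \ref{simplepro} to the locally finite subgroups of $G$, noting that $\ac_{pr}$ passes to subgroups because pronormality in $G$ implies pronormality in every subgroup containing it. I would then mimic the argument of Proposition \ref{radbyfin} from \cite{Dardo}, working with preimages. In those preimages, antichains of the required type are built from direct products of non-trivial sections, exactly as in Lemma \ref{int1}, so they stay inside $G$.
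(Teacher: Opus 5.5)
Your proposal is correct and follows the paper's proof. You cite Dardano--De Mari for (i)--(vii), use Lemma~\ref{rob} for (v)$\Rightarrow$(viii), and obtain (viii)$\Rightarrow$(vii) by reaching the radical-by-finite case through Propositions~\ref{radbyfin} and~\ref{simplepro} and then applying Theorem~\ref{radual}.

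The ``main obstacle'' in your last paragraph is not real, because your second paragraph already settles it. Take $Y\le H\le X$ with $Y\trianglelefteq X$. Pronormality of $H$ in $G$ passes to $X$, since $\langle H,H^x\rangle\le X$, and then to $H/Y$ in $X/Y$. Permutability, independence and joins all lift to preimages. So the class of groups satisfying $\ac_{pr}$ is itself closed under subgroups and homomorphic images, and Proposition~\ref{radbyfin} applies directly, with no need to redo it via preimages. The paper leaves this point implicit.
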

\begin{proof}
By \cite[Proposition 2.1]{Dardo} and \cite[Theorem 2.13]{Dardo2}, the first 6 conditions are equivalent for a radical group. Using, in addition, Proposition \ref{radbyfin} and Proposition \ref{simplepro}, we have that $G$ is radical-by-finite, so that they also hold for any generalized radical group. Each of conditions (i)--(vi) implies (viii) by Lemma \ref{rob}, and Theorem \ref{radual} then yields (vii). Conversely, (vii) clearly implies (vi). Finally, assume that $G$ satisfies (viii). If $G$ is not minimax, the conclusion in (vii) follows again from Theorem \ref{radual}.
\end{proof}

\bigskip

\begin{flushleft}
\rule{8cm}{0.4pt}\\
\end{flushleft}

{
\sloppy
\noindent
Mattia Brescia

\noindent 
Dipartimento di Matematica e Applicazioni ``Renato Caccioppoli''

\noindent
Università di Napoli Federico II

\noindent
Complesso Universitario Monte S. Angelo

\noindent
Via Cintia, Napoli (Italy)

\noindent
e-mail: mattia.brescia@unina.it

}

\bigskip\bigskip

{
\sloppy
\noindent
Bernardo Giuseppe Di Siena, Alessio Russo

\noindent 
Dipartimento di Matematica e Fisica

\noindent
Universit`a degli Studi della Campania ``Luigi Vanvitelli''

\noindent
Viale Lincoln 5, Caserta (Italy)

\noindent
e-mail: bernardogiuseppe.disiena@unicampania.it; alessio.russo@unicampania.it 

}

\end{document}